\newtheorem{theorem}{Theorem}[section]
\newtheorem{lemma}[theorem]{Lemma}
\theoremstyle{definition}
\newtheorem{definition}[theorem]{Definition}
\theoremstyle{remark}
\newtheorem{remark}[theorem]{Remark}
\numberwithin{equation}{section}
\newcommand{\abs}[1]{\lvert#1\rvert}
\begin{document}
\title{A Deep Uzawa-Lagrange Multiplier Approach for Boundary
  Conditions in PINNs and Deep Ritz Methods}
\author{Charalambos G. Makridakis}
\address{DMAM, University of Crete / Institute of Applied and Computational Mathematics, FORTH, 700 13 Heraklion, Crete, Greece, and MPS, University of Sussex, Brighton BN1 9QH, UK}
\email{C.G.Makridakis@iacm.forth.gr}
\thanks{}

\author{Aaron Pim}
\address{Department of Mathematical Sciences,
      University of Bath, Claverton Down, Bath BA2 7AY, UK}
\curraddr{}
\email{A.R.Pim@bath.ac.uk}
\thanks{}

\author{Tristan Pryer}
\address{Department of Mathematical Sciences,
      University of Bath, Claverton Down, Bath BA2 7AY, UK}
\curraddr{}
\email{tmp38@bath.ac.uk}
\thanks{}

\begin{abstract}
  We introduce a deep learning-based framework for weakly enforcing
  boundary conditions in the numerical approximation of partial
  differential equations. Building on existing physics-informed neural
  network and deep Ritz methods, we propose the Deep Uzawa algorithm,
  which incorporates Lagrange multipliers to handle boundary
  conditions effectively. This modification requires only a minor
  computational adjustment but ensures enhanced convergence properties
  and provably accurate enforcement of boundary conditions, even for
  singularly perturbed problems.

  We provide a comprehensive mathematical analysis demonstrating the
  convergence of the scheme and validate the effectiveness of the Deep
  Uzawa algorithm through numerical experiments, including
  high-dimensional, singularly perturbed problems and those posed over non-convex domains.
\end{abstract}
\maketitle
\section{Introduction}

The numerical approximation of partial differential equations (PDEs)
using artificial neural networks (ANNs) has gained significant
attention in recent years \cite{EYu:2018, SirignanoSpiliopoulos:2018,
  RaissiPerdikarisKarniadakis:2019, Yulei_Liao_2021,
  GeorgoulisLoulakisTsiourvas:2023}. This surge is largely attributed
to the success of deep learning in various complex tasks
\cite{LeCunBengioHinton:2015, GoodfellowBengioCourville:2016}. In the
context of solving PDEs, neural network-based methods such as the deep
Ritz approach \cite{EYu:2018}, which approximates solutions by
minimising the Dirichlet energy, and the physics-informed neural
networks (PINNs) \cite{RaissiPerdikarisKarniadakis:2019}, which
minimise the $\leb{2}$-norm of the residuals are prominent examples.

Despite their success, a challenge in these methods lies in the
enforcement of boundary conditions. While classical numerical methods
also face difficulties in this regard, the non-standard nature of
neural network approximation spaces makes this issue particularly
pronounced. Standard penalty approaches often require large penalty
weights to enforce boundary conditions accurately, resulting in
ill-conditioned optimisation problems that are difficult to tune and
can lead to suboptimal solutions. This issue is particularly severe in
problems involving singular perturbations or complex
domains. Additionally, the practical implementation of boundary
conditions in ANN-based methods poses challenges, as accurately
capturing boundary data within the neural network's architecture often
proves difficult.

To address these challenges, we propose extending the Lagrange
multiplier framework from finite elements, as introduced by
Babu\v{s}ka \cite{Babuska:1973b}, to neural network-based PDE
solvers. Our work develops a  class of algorithms termed Deep
Uzawa algorithms, which iteratively solve the resulting saddle point
problems to weakly impose boundary conditions. The key innovation lies
in adapting Uzawa's algorithm \cite{Uzawa:1958} to this context,
allowing for efficient iterative approximation of PDEs where boundary
conditions are enforced using an augmented Lagrangian
formulation. This approach ensures that the algorithmic framework
remains stable and accurate due to the coercivity of the energies
involved.

The Deep Uzawa methods, RitUz and PINNUz, extend existing deep Ritz
and PINN frameworks with minimal modifications, making them highly
practical for integration into current computational workflows. The
theoretical analysis provided includes convergence proofs that
demonstrate the iterative schemes' stability at the PDE level. These
theoretical guarantees offer a robust foundation for the
implementation of the Deep Uzawa algorithms and provide insight into
their convergence behaviour. We compare the behaviour of these approaches to
the vanilla methods and show that the Deep Uzawa approach
achieves comparable or superior performance without relying on tuning penalty parameters.

Numerous methods have been explored for weakly imposing boundary
conditions within ANN-based PDE solvers. The deep Ritz method
\cite{EYu:2018} and PINNs \cite{RaissiPerdikarisKarniadakis:2019} form
the foundational basis for many current approaches, but both rely on
penalty terms for boundary enforcement, which can make optimisation
challenging, particularly when large penalties are needed. To address
these shortcomings, \cite{Yulei_Liao_2021} proposed an adaptation
using Nitsche's method \cite{Nitsche:1971} from finite element
analysis to weakly impose boundary conditions, mitigating conformity
issues highlighted in \cite{EYu:2018}. Similarly,
\cite{WangXuZhangZhang:2020} compared traditional Ritz-Galerkin
methods with ANN-based approaches, noting the implicit regularisation
properties provided by neural networks. Other advancements, such as
the penalty-free neural network strategy in \cite{ShengYang:2021},
have targeted second-order boundary value problems in complex
geometries. In high-dimensional settings, \cite{GrohsHerrmann:2022}
explored deep learning approaches for elliptic PDEs with non-trivial
boundary conditions, showcasing the flexibility of neural networks in
handling such cases.

Our Deep Uzawa method builds on these developments by leveraging a
consistent saddle point framework to address the boundary enforcement
problem, offering a minor tweak computationally that provably enhances stability and
accuracy. The application of Uzawa-type iterations in neural network
contexts, as presented in
\cite{makridakis2024deepuzawapdeconstrained}, serves as a foundation
for our iterative scheme. Our approach provides a structured way to
balance the competing objectives of PDE accuracy and boundary
condition enforcement, demonstrating improved stability in various
numerical experiments, including problems on non-convex domains and
high-dimensional geometries. 

The rest of the paper is organized as follows: In \S\ref{sec:Sobolev},
we introduce the notation and fundamental concepts related to Sobolev
spaces, which form the basis for the functional framework of our
analysis. \S\ref{sec:Dirichlet_min} presents the development of a Deep Ritz-Uzawa (RitzUz) method, including a proof of convergence in suitable
Sobolev spaces. In \S\ref{sec:PINNs_min}, we extend this approach to
the PINNs-Uzawa (PINNUz) scheme, demonstrating convergence within an
appropriate space for least-squares minimisation. The construction of
neural network approximations and their integration within the Deep
Uzawa framework are discussed in \S\ref{sec:NN}. Numerical results
showcasing the effectiveness of our methods for boundary layer
problems, those in complex geometries and high dimension are provided in
\S\ref{sec:Numresults}. 

\section{Notation and problem setup}
\label{sec:Sobolev}

We will use a standard notation for Sobolev spaces \cite{EG21-I}. For
$\W\subseteq\reals^d$, we denote by $\Norm{\cdot}_{\leb{p}(\W)}$ the
$\leb{2}(\W)$-norm with associated inner product
$\ip{\cdot}{\cdot}_{\leb{2}(\W)}$. For $s\ge 0, p\in (1,\infty)$, we
denote by $\Norm{\cdot}_{\sobh{p}(\W)}$
($\norm{\cdot}_{\sobh{p}(\W)}$) the norm (semi-norm) in
the Hilbert space $\sobh{p}(\W)$.  We will now provide a short summary of fractional
and negative Sobolev spaces and their associated norms and inner
products.

\subsection{Fractional and Negative Sobolev spaces}

For $m \in \mathbb{N}$, we define the fractional Sobolev space
$\sobh{m-\frac{1}{2}}(\partial \W)$ as the set of traces of functions
in $\sobh{m}(\W)$,
\begin{equation}
  \sobh{m-\frac{1}{2}}(\partial \W)
  :=
  \setbra{g:\partial \W \rightarrow \mathbb{R}: ~ u = g \text{ a.e. on }\partial \W, ~ \forall u \in \sobh{m}(\W)}.
\end{equation}
In general, fractional Sobolev spaces are defined for all exponents
and powers in terms of Gagliardo semi-norms. However, for the purposes
of this paper, the above definition is sufficient as we consider the
case when $m = 1,2$, which are particularly relevant for analysis of
the methods discussed.

For $s > 0$, the negative Sobolev space $\sobh{-s}(\partial \W)$ is
defined as the dual of the space $\sobh{s}(\partial \W)$ and has an
associated norm given by
\begin{equation}
  \Norm{G}_{\sobh{-s}(\partial \W)}
  :=
  \sup\limits_{v \in \sobh{s}(\partial \W)} \dfrac{\duality{G}{v}_{\sobh{-s}(\partial \W)\times \sobh{s}(\partial \W)}}{\Norm{v}_{\sobh{s}(\partial \W)}},
\end{equation}
where $\duality{G}{v}_{\sobh{-s}(\partial \W)\times \sobh{s}(\partial
  \W)}$ represents the duality pairing, i.e., the action of the linear
functional $G$ applied to the function $v$.

To facilitate later analysis, we recall the following trace theorem,
which will be used to relate functions defined on $\W$ to their
behaviour on $\partial \W$.

\begin{theorem}[{\cite[Theorem 1]{Gagliardo57}}]
\label{the:trace}
Let $s>\frac 12$. Then, for $v\in\sobh s(\W)$, there exists a constant
$C_{tr} > 0$ such that
\begin{equation}
    \Norm{v}_{\leb2(\partial \W)}
    \leq
    C_{tr} \Norm{v}_{\sobh s(\W)}.
\end{equation}
\end{theorem}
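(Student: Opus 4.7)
The plan is to reduce the estimate to a model problem on the half-space via standard localization, and then close the bound using the Fourier characterization of $\sobh{s}$. Assuming $\partial\W$ admits a finite atlas of bi-Lipschitz boundary charts, I would choose a subordinate partition of unity $\{\chi_j\}$ and flatten each boundary piece, transferring $\chi_j v$ to a function $u_j$ supported near $\{x_d = 0\}$ in the half-space $\reals^d_+$. Extending $u_j$ to all of $\reals^d$ by a bounded extension operator $E : \sobh{s}(\reals^d_+) \to \sobh{s}(\reals^d)$ (for instance Stein's) reduces the task to the model inequality
$$
\Norm{U(\cdot, 0)}_{\leb{2}(\reals^{d-1})} \le C\, \Norm{U}_{\sobh{s}(\reals^d)}, \qquad s > \tfrac12.
$$

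For this model estimate I would pass to the Fourier side. Writing $(\mathcal{F}_{x'} U)(\xi', 0) = \tfrac{1}{2\pi}\int_{\reals} \hat U(\xi', \xi_d)\, d\xi_d$ and multiplying and dividing by $(1+|\xi|^2)^{s/2}$, Cauchy--Schwarz yields
$$
|(\mathcal{F}_{x'} U)(\xi', 0)|^2
\le
\left(\int_{\reals} \frac{d\xi_d}{(1+|\xi'|^2+\xi_d^2)^{s}}\right)\int_{\reals} (1+|\xi|^2)^{s} |\hat U(\xi', \xi_d)|^2\, d\xi_d.
$$
Rescaling $\xi_d = (1+|\xi'|^2)^{1/2}\eta$ identifies the first factor with $C(1+|\xi'|^2)^{\frac12-s}$, where convergence of the one-dimensional $\eta$-integral requires \emph{exactly} the condition $s > 1/2$. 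Since this weight is then bounded by $1$, integrating in $\xi'$ and invoking Plancherel in the tangential variables produces the half-space estimate.

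Finally, summing the localized contributions via the partition of unity, and absorbing the Jacobians of the chart maps together with the operator norm of the extension into a single constant, produces the desired inequality on $\W$. The main obstacle in this programme is not the Fourier calculation but the boundary regularity hidden in the localization step: the construction genuinely requires $\partial\W$ to be at least Lipschitz so that the bi-Lipschitz charts and the extension operator are available. A more economical alternative would be to first establish continuity of the full trace map $\sobh{s}(\W) \to \sobh{s-\frac12}(\partial\W)$ and then apply the trivial embedding $\sobh{s-\frac12}(\partial\W) \hookrightarrow \leb{2}(\partial\W)$, but the direct half-space argument sketched above has the advantage of avoiding any fractional Sobolev norm on the boundary, which is otherwise unnecessary for the precise bound as stated.
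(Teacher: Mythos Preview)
The paper does not give its own proof of this statement: Theorem~\ref{the:trace} is simply quoted from the literature (Gagliardo, 1957) and used as a black box, so there is nothing in the paper to compare your argument against. Your sketch is the standard modern proof --- localize via a partition of unity and bi-Lipschitz boundary charts, extend to $\reals^d$, and then run the Fourier/Cauchy--Schwarz computation on the half-space model --- and it is correct as written; in particular you identify the precise point where $s>\tfrac12$ enters, namely the convergence of $\int_{\reals}(1+\eta^2)^{-s}\,d\eta$. The only caveat worth flagging is the one you already mention: the localization step tacitly assumes $\partial\W$ is at least Lipschitz, an assumption the paper never states explicitly but clearly relies on throughout.
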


\begin{remark}[Computability of $C_{tr}$]
  \label{rem:trace}
  The trace constant $C_{tr}$ can, in many cases, be 
  estimated, particularly for standard geometries such as cubes and
  spheres \cite{cangiani2022hp}. For a domain $\W$ that is star-shaped with respect to a point $\vec x_0 \in \W$ and satisfies $(\vec x - \vec x_0) \cdot \vec n (\vec x) > 0$ for all $\vec x \in \partial \W$. Then we have that the following inequality holds
  \begin{equation}
  	C_{tr}
  	\leq 
	\dfrac{1}{2 \min\limits_{\vec x \in \partial \W}(\vec x - \vec x_0) \cdot \vec n (\vec x) }\bra{d + \sqrt{d^2 + 4 \max\limits_{\vec x \in \partial \W} \abs{\vec x - \vec x_0}^2}}
  \end{equation}
\end{remark}

\section{Dirichlet energy minimisation}\label{sec:Dirichlet_min}

In this section, we introduce the model problem and provide some
background on the Lagrange multiplier method, highlighting its
connection to the Uzawa algorithm when the Dirichlet energy defines
the loss function as in Ritz-based neural network methods.

To that end, we consider a self-adjoint elliptic problem with a strictly positive definite matrix $\vec A
\in \reals^{d\times d}$, $f \in \leb2(\W)$, and $g \in
\sobh{1/2}(\partial \W)$. To set up the problem, we define the
function space:
\begin{equation}
  \sobhg{1}(\W)
  :=
  \ensemble{\phi \in \sobh1(\W)}{\phi\vert_{\partial \W} = g}.
\end{equation}
We then seek a solution $u \in \sobhg 1(\W)$ such that:
\begin{equation}
\label{eq:elliptic}
\begin{split}
  \mathscr{L}u:=-\div\bra{\vec A \nabla u} + u &= f \text{ in } \W,
\end{split}
\end{equation}
in the weak sense, that is $u$ satisfies 
\begin{equation}
  \label{eq:elliptic-weak}
  \ip{\vec A \nabla u}{\nabla v}_{\leb2(\W)} + \ip{u}{v}_{\leb2(\W)}
  =
  \ip{f}{v}_{\leb2(\W)} \quad \Foreach v \in \sobhz1(\W).
\end{equation}
We observe that equation \eqref{eq:elliptic} is the Euler-Lagrange
equation corresponding to the minimisation of the convex quadratic
Dirichlet functional
\begin{equation}\label{eq:EL_functional}
  J_D(u)
  :=
  \frac{1}{2} \Norm{\vec A^{1/2} \nabla u}^2_{\leb2(\W)}
  + 
  \frac{1}{2}\Norm{u}^2_{\leb2(\W)}
  -
  \ip{f}{u}_{\leb2(\W)}.
\end{equation}
Furthermore, the weak solution to (\ref{eq:elliptic-weak}) also
minimises the Dirichlet energy, that is
\begin{equation}
  u = \argmin_{\phi \in \sobhg1(\W)} J_D(\phi).
\end{equation}
This variational formulation serves as the foundation for deep Ritz
neural network methods, where neural networks are utilised to
approximate the minimiser of $J_D(u)$.

\subsection{Penalty methods}

In practice, it is often beneficial to pose the minimisation problem
over a larger space, such as $\sobh1(\W)$, and enforce the boundary
condition as a constraint within the formulation. This can be
accomplished by extending the definition of
$J_{D}:\sobh1(\W)\rightarrow \reals$ through the addition of a penalty
term
\begin{equation}
  J_{D}(u)
  :=
  \frac{1}{2} \Norm{{\vec A}^{1/2} \nabla u}^2_{\leb2(\W)}
  + 
  \frac{1}{2}\Norm{u}^2_{\leb2(\W)}
  -
  \ip{f}{u}_{\leb2(\W)}
  +
  \frac{\gamma}{2}\Norm{u - g}^2_{X},
\end{equation}
for some $\gamma \geq 0$ and an appropriate normed space $X$. A
natural choice from an analytic point of of view for $X$ is
$\sobh{1/2}(\partial \W)$.

\begin{remark}[Finite element approaches]
  In the context of finite element methods, the penalty term is often
  formulated using a mesh-weighted $\leb2$ norm. Let $h$ denote the
  finite element discretisation parameter then one can consider
  \begin{equation}
    \begin{split}
      J_{D,h}(u_h)
      :=&
      \frac{1}{2} \Norm{{\vec A}^{1/2} \nabla u_h}^2_{\leb2(\W)}
      + 
      \frac{1}{2}\Norm{u_h}^2_{\leb2(\W)}
        -
      \ip{f}{u_h}_{\leb2(\W)}
      +
      \frac{\gamma}{h}\Norm{u_h - g}^2_{\leb{2}(\partial \W)}.
    \end{split}
\end{equation}
It can then be shown that, by choosing $\gamma$ sufficiently large, we
have:
\begin{equation}
  \lim_{h\to 0 } \Norm{u - u_h}_{\sobh1(\W)} = 0.
\end{equation}
This argument relies on applying an inverse estimate, however, such estimates are not available in the context of neural network approximations.
\end{remark}

\begin{definition}[Dirichlet Loss Functional]
  To maintain compatibility with neural network-based methodologies,
  we use $X = \leb2(\partial \W)$-penalty term, i.e.,
  \begin{equation}\label{eq:DG_Cost_functional}
    J_{D}(u)
    :=
    \dfrac{1}{2}\Norm{{\vec A}^{1/2}\nabla u}^2_{\leb2(\W)}
    +
    \dfrac{1}{2}\Norm{u}^2_{\leb2(\W)}
    - 
    \ip{f}{u}_{\leb2(\W)}
    +
    \dfrac{\gamma}{2}\Norm{u - g}^2_{\leb2(\partial \W)}.
  \end{equation}
\end{definition}

\begin{remark}[Limitations of penalty only approaches]
  In neural network approaches, the penalty term in
  (\ref{eq:DG_Cost_functional}) is often insufficient to guarantee the
  solution well approximates both the PDE and the boundary
  condition. This is as as the resulting Euler-Lagrange equations
  corresponding to \eqref{eq:DG_Cost_functional} are:
\begin{equation}
    \begin{split}
        \mathscr{L} u - f &= 0 \quad \text{in } \W, \\
        \vec n \cdot \vec A \nabla u + \gamma\bra{u - g} &= 0 \quad \text{on } \partial \W.
    \end{split}
\end{equation}
Thus, the minimiser of $J_{D}$ satisfies a Robin boundary condition
with parameter $\gamma$, rather than a true Dirichlet boundary
condition. This observation motivates the exploration of alternative
methods, such as those involving Lagrange multipliers
\cite{Babuska:1973b}.
\end{remark}

The first core idea of our approach is the introduction of the
Lagrangian.
\begin{definition}[Dirichlet energy Lagrangian]
  For a given $g \in \sobh{1/2}(\partial \W)$ and $f \in \leb2(\W)$,
  let the Lagrangian $L_D:\sobh1(\W)\times \sobh{-1/2}(\partial \W)
  \rightarrow \mathbb{R}$ be given by:
  \begin{equation}
    \label{eq:Dirchlet_energy_Lagrangian_defn}
    L_D(u,\lambda)
    :=
    J_{D}(u)
    -
    \duality{\lambda}{ u -g}_{\sobh{-1/2}(\partial \W) \times \sobh{1/2}(\partial \W)}.
  \end{equation}
  This Lagrangian formulation allows us to incorporate the boundary
  condition $u = g$ weakly by introducing a Lagrange multiplier
  \(\lambda\).
\end{definition}

We then seek the saddle points of the Lagrangian $L_D$, that satisfy
$(u^*, \lambda^*) \in \sobh{1}(\W) \times \sobh{-1/2}(\partial \W)$
such that:
\begin{equation}\label{defn:Dirchlet_energy_Lagrangian_saddle_points}
  (u^*, \lambda^*)
  =
  \argmin\limits_{u \in \sobh{1}(\W)}
  \argmax\limits_{\lambda \in \sobh{-1/2}(\partial \W)} L_D(u,\lambda).
\end{equation}
This problem leads to the Euler-Lagrange equations
\begin{equation}
  \begin{split}
    \mathscr{L}u^* - f &= 0 \quad \text{in } \W \\
    u^* - g &= 0 \quad \text{on } \partial \W \\
    -\lambda^* + \vec n \cdot 
    \vec A \nabla u^* &= 0 \quad \text{on } \partial \W.
  \end{split}
\end{equation}
To solve the saddle point problem
\eqref{defn:Dirchlet_energy_Lagrangian_saddle_points}, we propose an
iterative method through an Uzawa algorithm at the continuum level.

\begin{definition}[Dirichlet energy update scheme]
  \label{defn:Dirchlet_energy_Lagrangian_update_scheme}
  For a given initial guess $\lambda^0 \in \sobh{-1/2}(\partial \W)$
  and a step size $\rho > 0$, we define the sequence of functions
  $\setbra{\lambda^k}_{k=0}^\infty \subset \sobh{-1/2}(\partial \W)$
  and $\setbra{u^k}_{k=0}^\infty \subset \sobh1(\W)$ by the following
  iterative scheme:
  \begin{equation}\label{eq:Dirchlet_energy_Lagrangian_update_scheme}
    \begin{split}
      u^k &= \argmin\limits_{u \in \sobh1(\Omega)} L_D(u, \lambda^k) \\
      \duality{\lambda^{k+1} - \lambda^k}{\phi}_{\sobh{-1/2}(\partial \W) \times \sobh{1/2}(\partial \W)}
      &= - \rho \ip{u^k - g}{\phi}_{\leb2(\partial \Omega)} \Foreach \phi \in\sobh{1/2}(\partial \W). 
        \end{split}
    \end{equation}
\end{definition}

\begin{remark}[Simplifications in the Neural Network Framework]
  Our algorithm relies on approximating the Uzawa iterates
  \eqref{eq:Dirchlet_energy_Lagrangian_update_scheme} within an
  appropriate neural network framework. Due to the inherent smoothness
  of neural network functions, their traces belong to
  $L^2(\partial \W)$. Similarly, the discrete Lagrange multipliers
  $\lambda^k_\ell$ are also functions in $L^2(\partial \W)$. As a
  result, duality pairings simplify to basic $L^2(\partial \W)$
  integrals
  \begin{equation}
    \duality{\lambda_\ell }{ u_\ell  -g}_{\sobh{-1/2}(\partial \W) \times \sobh{1/2}(\partial \W)} 
    = \int _ {\partial \W} ( u_\ell  -g ) \, \lambda_\ell  \, ds.
  \end{equation}
  Additionally, updating the Lagrange multiplier in the second
  equation of \eqref{eq:Dirchlet_energy_Lagrangian_update_scheme}
  simplifies to a straightforward function evaluation on $\partial
  \W$. For more details, see Section 5.3. These observations result in
  a particularly simple and efficient implementation.
\end{remark}

\begin{theorem}\label{thm:Dirichlet_energy_convergence}
  Let the sequences $\{u^k\}$ and $\{\lambda^k\}$ denote the Uzawa
  iterates given in Definition
  \ref{defn:Dirchlet_energy_Lagrangian_update_scheme}, and
  \(\bra{u^*,\lambda^*}\) denote the saddle point of
  \eqref{defn:Dirchlet_energy_Lagrangian_saddle_points}. Let
  $C_{tr}>0$ is the trace constant associated with $\W$ and assume
  that $\vec A$ is a strictly positive definite matrix with the
  smallest eigenvalue $\sigma_{\min} > 0$. Suppose further that the
  parameters $\gamma \geq 0$ and $\rho > 0$ satisfy
  \begin{equation}
    \label{eq:Dirichlet_energy_rho_assumption}
    \rho - 2\gamma < \dfrac{2\min\setbra{\sigma_{\min},1}}{C_{\rm{tr}}^2}.
  \end{equation}
Then we have
\begin{equation}
  u^k \rightarrow u^*, \text{ in }\sobh1(\W) \text{ as } k \rightarrow \infty.
\end{equation}
\end{theorem}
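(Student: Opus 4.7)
The plan is to run the classical Uzawa convergence argument adapted to our continuous saddle-point setting. The key identity I would aim for is an estimate of the form
\begin{equation*}
\Norm{\lambda^{k+1}-\lambda^*}^2_{\leb2(\partial\W)}
\leq \Norm{\lambda^k-\lambda^*}^2_{\leb2(\partial\W)}
- C \Norm{u^k-u^*}^2_{\sobh1(\W)},
\end{equation*}
with $C>0$ under the assumption \eqref{eq:Dirichlet_energy_rho_assumption}. Summing over $k$ then yields $u^k\to u^*$ in $\sobh1(\W)$.

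To set this up, I would first write the Euler--Lagrange equations for $u^k$ (the inner minimiser of $L_D(\cdot,\lambda^k)$) and for $u^*$: for every $v\in\sobh1(\W)$,
\begin{equation*}
\ip{\vec A\nabla u^k}{\nabla v}_{\leb2(\W)} + \ip{u^k}{v}_{\leb2(\W)}
+ \gamma\ip{u^k-g}{v}_{\leb2(\partial\W)}
= \ip{f}{v}_{\leb2(\W)} + \duality{\lambda^k}{v},
\end{equation*}
and similarly for $(u^*,\lambda^*)$, noting that the $\gamma$-term disappears on the saddle side because $u^*=g$ on $\partial\W$. Subtracting and setting $e^k:=u^k-u^*$, $\mu^k:=\lambda^k-\lambda^*$, then testing with $v=e^k$ produces the coercivity identity
\begin{equation*}
\Norm{\vec A^{1/2}\nabla e^k}^2_{\leb2(\W)} + \Norm{e^k}^2_{\leb2(\W)}
+ \gamma\Norm{e^k}^2_{\leb2(\partial\W)}
= \ip{\mu^k}{e^k}_{\leb2(\partial\W)},
\end{equation*}
where I have used that $e^k\vert_{\partial\W} = u^k-g$.

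Next I would exploit the update rule, which (since $u^*=g$ on $\partial\W$) gives $\mu^{k+1} = \mu^k - \rho\,e^k\vert_{\partial\W}$ in the $\leb2(\partial\W)$ sense; working in $\leb2(\partial\W)$ is justified by the smoothness of the iterates as discussed in the preceding remark. Expanding the square,
\begin{equation*}
\Norm{\mu^{k+1}}^2_{\leb2(\partial\W)}
= \Norm{\mu^k}^2_{\leb2(\partial\W)}
- 2\rho\ip{\mu^k}{e^k}_{\leb2(\partial\W)}
+ \rho^2\Norm{e^k}^2_{\leb2(\partial\W)},
\end{equation*}
and substituting the coercivity identity yields
\begin{equation*}
\Norm{\mu^{k+1}}^2_{\leb2(\partial\W)}
= \Norm{\mu^k}^2_{\leb2(\partial\W)}
- 2\rho\bra{\Norm{\vec A^{1/2}\nabla e^k}^2_{\leb2(\W)} + \Norm{e^k}^2_{\leb2(\W)}}
+ \rho(\rho-2\gamma)\Norm{e^k}^2_{\leb2(\partial\W)}.
\end{equation*}

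The decisive step is to absorb the boundary term using Theorem~\ref{the:trace} together with the coercivity constant $\min\{\sigma_{\min},1\}$. Bounding $\Norm{e^k}^2_{\leb2(\partial\W)} \leq C_{tr}^2 \Norm{e^k}^2_{\sobh1(\W)}$ (only when $\rho-2\gamma\ge 0$; otherwise the term already has the right sign), I obtain
\begin{equation*}
\Norm{\mu^{k+1}}^2_{\leb2(\partial\W)}
\leq \Norm{\mu^k}^2_{\leb2(\partial\W)}
-\rho\bra{2\min\setbra{\sigma_{\min},1} - (\rho-2\gamma)C_{tr}^2}\Norm{e^k}^2_{\sobh1(\W)}.
\end{equation*}
Assumption \eqref{eq:Dirichlet_energy_rho_assumption} makes the bracket strictly positive, so $\{\Norm{\mu^k}^2\}$ is monotone non-increasing and telescoping yields $\sum_{k=0}^\infty\Norm{e^k}^2_{\sobh1(\W)} < \infty$, in particular $u^k\to u^*$ in $\sobh1(\W)$. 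The main technical care I expect to need is in two places: ensuring the distinction between the $\sobh{-1/2}$ duality used to define the Lagrangian and the $\leb2(\partial\W)$ inner product driving the update is harmless (which follows from the regularity of the iterates), and in tracking the sign of $\rho-2\gamma$ so that the trace inequality is invoked only when it actually needs to absorb a positive contribution.
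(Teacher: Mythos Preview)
Your overall strategy matches the paper's: derive the coercivity identity (the paper's Lemma~\ref{lemma:bounds_on_the_dirichlet_lagrangian}), expand the squared multiplier error via the update rule (the paper's Lemma~\ref{lemma:Dirichlet_energy_saddle_point_property}), substitute one into the other, split on the sign of $\rho-2\gamma$, and absorb the boundary term with the trace inequality when that sign is unfavourable. The telescoping conclusion is equivalent to the paper's monotone-sequence argument.

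The one genuine gap is your choice to measure $\mu^k=\lambda^k-\lambda^*$ in $\leb2(\partial\W)$ rather than in $\sobh{-1/2}(\partial\W)$. The saddle-point multiplier satisfies $\lambda^*=\vec n\cdot\vec A\nabla u^*$ on $\partial\W$, which lies in $\sobh{-1/2}(\partial\W)$ in general but not in $\leb2(\partial\W)$ without additional regularity of $u^*$ that the theorem does not assume; likewise Definition~\ref{defn:Dirchlet_energy_Lagrangian_update_scheme} only requires $\lambda^0\in\sobh{-1/2}(\partial\W)$. The remark you invoke concerns the neural-network approximations $\lambda^k_\ell$, not the continuum objects $\lambda^k$, $\lambda^*$ appearing in the theorem, so it does not rescue $\Norm{\mu^k}_{\leb2(\partial\W)}$. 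The paper addresses exactly this by carrying out the expansion-of-the-square step in $\sobh{-1/2}(\partial\W)$ via the Riesz map $R_1$: one tests the update relation successively with $\psi=R_1[\lambda^{k+1}-\lambda^*]$, $\psi=R_1[\lambda^k-\lambda^*]$, and $\psi=u^k-u^*$ to obtain the same three-term identity with $\Norm{\cdot}_{\sobh{-1/2}(\partial\W)}$ in place of $\Norm{\cdot}_{\leb2(\partial\W)}$. With that change of norm your argument goes through verbatim.
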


\begin{remark}[Convergence Regimes]
  There are two regimes in which the inequality in equation
  \eqref{eq:Dirichlet_energy_rho_assumption} holds. The first is when
  $\gamma$ is large relative to $\rho$, corresponding to $2\gamma \geq
  \rho$. In this regime, the inequality is trivially satisfied, and
  the bound does not depend on the matrix $\vec A$. The second is when
  $\gamma$ is small relative to $\rho$, including the case when
  $\gamma = 0$, corresponding to $2\gamma < \rho$. Here, the Uzawa
  update step-size $\rho$ must be sufficiently small. These regimes
  illustrate how the balance between $\gamma$ and $\rho$ impacts
  convergence.
\end{remark}

To prove Theorem \ref{thm:Dirichlet_energy_convergence}, we will prove
or state a series of definitions and technical lemmata that will be
used in the proof. To begin, we recall the classical Riesz
Representation Theorem and some of its consequences, which play a
important role in the convergence analysis.

\begin{theorem}[Riesz Representation]
  For every $s \in \mathbb{N}$ and $z \in \sobh{-s/2}(\partial\W)$,
  there exists a unique element $R_s[z] \in \sobh{s/2}(\partial \W)$,
  known as the Riesz representor of $z$, such that for all $\phi \in
  \sobh{s/2}(\partial\W)$, the following holds:
  \begin{equation}
    \label{eq:Riesz_representor}
    \ip{R_s[z]}{\phi}_{\sobh{s/2}(\partial\W)}
    :=
    \duality{z}{\phi}_{\sobh{-s/2}(\partial \W) \times \sobh{s/2}(\partial \W)}
    \Foreach \phi \in \sobh{s/2}(\partial\W).
  \end{equation}
  Moreover, the mapping $R_s: \sobh{-s/2}(\partial\W) \to
  \sobh{s/2}(\partial\W)$ is an isometric isomorphism, i.e., it
  preserves the norm structure:
  \begin{equation}
    \Norm{R_s[z]}_{\sobh{s/2}(\partial\W)}   
    =
    \Norm{z}_{\sobh{-s/2}(\partial\W)}.
  \end{equation}
  Furthermore, as a consequence of this definition, for any $z \in \sobh{-s/2}(\partial\W)$, we have:
  \begin{equation}
    \Norm{R_s[z]}^2_{\sobh{s/2}(\partial\W)}   
    =
    \duality{z}{R_s[z]}_{\sobh{-s/2}(\partial \W) \times \sobh{s/2}(\partial \W)}.
  \end{equation}
\end{theorem}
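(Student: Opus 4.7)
The plan is to invoke the classical Riesz Representation Theorem on the Hilbert space $\sobh{s/2}(\partial \W)$ and then read off the remaining claims as immediate corollaries. The only structural fact needed is that $\sobh{s/2}(\partial \W)$, endowed with its canonical inner product, is a Hilbert space, and that $\sobh{-s/2}(\partial \W)$ is defined precisely as its topological dual with the dual norm already recorded in the excerpt.

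First, I fix $z\in\sobh{-s/2}(\partial\W)$ and regard it as a bounded linear functional $\phi\mapsto\duality{z}{\phi}$ on $\sobh{s/2}(\partial\W)$; boundedness with operator norm equal to $\Norm{z}_{\sobh{-s/2}(\partial\W)}$ is built into the definition of the dual norm. The classical Riesz Representation Theorem for Hilbert spaces then produces a unique $R_s[z]\in\sobh{s/2}(\partial\W)$ satisfying
\begin{equation*}
\ip{R_s[z]}{\phi}_{\sobh{s/2}(\partial\W)} = \duality{z}{\phi}_{\sobh{-s/2}(\partial\W)\times\sobh{s/2}(\partial\W)} \quad \Foreach \phi\in\sobh{s/2}(\partial\W),
\end{equation*}
which gives the defining identity \eqref{eq:Riesz_representor} together with uniqueness. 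Linearity of the assignment $z\mapsto R_s[z]$ follows from uniqueness and the linearity of the duality pairing, so $R_s$ is a well-defined linear map.

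For the isometry, I use the Cauchy–Schwarz inequality in $\sobh{s/2}(\partial\W)$ to bound
\begin{equation*}
\abs{\duality{z}{\phi}} = \abs{\ip{R_s[z]}{\phi}_{\sobh{s/2}(\partial\W)}} \leq \Norm{R_s[z]}_{\sobh{s/2}(\partial\W)}\Norm{\phi}_{\sobh{s/2}(\partial\W)},
\end{equation*}
which upon dividing and taking the supremum yields $\Norm{z}_{\sobh{-s/2}(\partial\W)} \leq \Norm{R_s[z]}_{\sobh{s/2}(\partial\W)}$. The reverse inequality comes from testing with $\phi=R_s[z]$ in the defining identity, giving $\Norm{R_s[z]}^2_{\sobh{s/2}(\partial\W)} = \duality{z}{R_s[z]} \leq \Norm{z}_{\sobh{-s/2}(\partial\W)}\Norm{R_s[z]}_{\sobh{s/2}(\partial\W)}$. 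Together these prove the norm-preserving property, which is also the final displayed identity of the theorem.

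Surjectivity of $R_s$, needed for the isomorphism claim, is immediate: any $w\in\sobh{s/2}(\partial\W)$ induces the functional $\phi\mapsto\ip{w}{\phi}_{\sobh{s/2}(\partial\W)}$, which lies in $\sobh{-s/2}(\partial\W)$ by Cauchy–Schwarz and whose Riesz representor must coincide with $w$ by the uniqueness established above. There is really no main obstacle here; the only subtle point worth flagging is ensuring that the fractional dual space $\sobh{-s/2}(\partial\W)$ is equipped with the dual Hilbertian structure corresponding to the inner product on $\sobh{s/2}(\partial\W)$ used in \eqref{eq:Riesz_representor}, so that the Riesz machinery applies directly; this is the standard convention adopted in the excerpt and the proof requires no further input beyond citing this identification.
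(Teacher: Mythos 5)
Your proposal is correct and matches the paper's treatment: the paper simply recalls this as the classical Riesz Representation Theorem without giving a proof, and your argument is precisely the standard proof of that classical result applied to the Hilbert space $\sobh{s/2}(\partial\W)$ with $\sobh{-s/2}(\partial\W)$ its dual equipped with the dual norm defined earlier in the paper. The isometry via Cauchy--Schwarz plus testing with $\phi = R_s[z]$, and surjectivity via uniqueness, are exactly the standard steps, so no further comment is needed.
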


To prove Theorem \ref{thm:Dirichlet_energy_convergence}, we first establish the following lemma, which provides bounds on the Uzawa iterates \(u^k\) and \(\lambda^k\) in relation to the saddle points \(u^*\) and \(\lambda^*\).

\begin{lemma}[Bounds on the Dirichlet Lagrangian]
  \label{lemma:bounds_on_the_dirichlet_lagrangian}
  Let the sequences $\{u^k\}, \{\lambda^k\}$ denote the Uzawa iterates
  given in Definition
  \ref{defn:Dirchlet_energy_Lagrangian_update_scheme} and
  $u^*,\lambda^*$ denote the saddle points of $L_D$
  (\ref{defn:Dirchlet_energy_Lagrangian_saddle_points}). Then we have
  \begin{equation}
    \begin{split}
      \Norm{{\vec A}^{1/2}\nabla(u^k-u^*)}^2_{\leb2(\W)}
      +
      \Norm{u^k-u^*}^2_{\leb2(\W)}
      +
      \gamma \Norm{u^k-u^*}^2_{\leb2(\partial\W)}
      \\
      =
      \duality{\lambda^k - \lambda^*}{u^k-u^*}_{\sobh{-1/2}(\partial \W) \times \sobh{1/2}(\partial \W)}.
      \end{split}
  \end{equation}
\end{lemma}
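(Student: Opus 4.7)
The plan is to derive the identity by computing the first-order optimality (Euler--Lagrange) conditions for both $u^k$ and $u^*$, subtracting them, and then testing the resulting equation with the natural test function $v = u^k - u^*$. Since both $u^k$ and $u^*$ are minimisers of $L_D(\cdot,\lambda)$ over the full space $\sobh1(\W)$ (with $\lambda = \lambda^k$ and $\lambda = \lambda^*$ respectively), this is a standard and rather clean calculation.

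First I would write down the Euler--Lagrange equation characterising $u^k = \argmin_{u\in \sobh1(\W)} L_D(u,\lambda^k)$. Differentiating $L_D$ with respect to $u$ in a direction $v \in \sobh1(\W)$ and recalling the definition of $J_D$ from \eqref{eq:DG_Cost_functional} yields
\begin{equation*}
\ip{\vec A\nabla u^k}{\nabla v}_{\leb2(\W)} + \ip{u^k}{v}_{\leb2(\W)} + \gamma\ip{u^k-g}{v}_{\leb2(\partial\W)} - \duality{\lambda^k}{v}_{\sobh{-1/2}(\partial\W)\times\sobh{1/2}(\partial\W)} = \ip{f}{v}_{\leb2(\W)}.
\end{equation*}
The same computation for the saddle point $u^*$, viewed as the $u$-minimiser of $L_D(\cdot,\lambda^*)$, gives an analogous identity with $\lambda^*$ in place of $\lambda^k$. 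Here I would use the saddle-point characterisation listed just after \eqref{defn:Dirchlet_energy_Lagrangian_saddle_points}, which in particular gives $u^* = g$ on $\partial\W$; this causes the boundary penalty term $\gamma\ip{u^*-g}{v}_{\leb2(\partial\W)}$ to vanish identically.

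Next I would subtract the two Euler--Lagrange identities, obtaining for every $v\in \sobh1(\W)$
\begin{equation*}
\ip{\vec A\nabla(u^k-u^*)}{\nabla v}_{\leb2(\W)} + \ip{u^k-u^*}{v}_{\leb2(\W)} + \gamma\ip{u^k-u^*}{v}_{\leb2(\partial\W)} = \duality{\lambda^k-\lambda^*}{v}_{\sobh{-1/2}(\partial\W)\times\sobh{1/2}(\partial\W)},
\end{equation*}
where I have used $u^k - g - (u^* - g) = u^k - u^*$ on $\partial\W$. Choosing $v = u^k - u^*$ (which lies in $\sobh1(\W)$, hence has a well-defined trace in $\sobh{1/2}(\partial\W)$ so the duality pairing on the right-hand side makes sense) immediately delivers the claimed identity.

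I do not foresee a serious obstacle: the entire argument is first-order optimality plus one clean test. The only subtle point is ensuring that the boundary trace of $u^*$ really equals $g$ so the penalty contribution drops out; this is justified by the saddle-point equations already derived in the text, namely $u^* - g = 0$ on $\partial\W$. Everything else is bookkeeping with the bilinear form and the duality pairing.
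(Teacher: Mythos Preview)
Your proposal is correct and follows essentially the same route as the paper: write the first-order optimality conditions for $u^k$ and $u^*$, subtract, and test with $u^k-u^*$. One minor remark: invoking $u^*=g$ on $\partial\W$ is unnecessary, since the paper simply keeps the penalty term $\gamma\ip{u^*-g}{\phi}_{\leb2(\partial\W)}$ in the Euler--Lagrange equation for $u^*$ and lets the algebraic cancellation $(u^k-g)-(u^*-g)=u^k-u^*$ do the work directly; your own justification ``$u^k-g-(u^*-g)=u^k-u^*$'' already captures this without needing the saddle-point boundary identity.
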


\begin{proof}
  Since $u^*$ and $\lambda^*$ are the saddle points of the Lagrangian
  $L_D$ and $\lambda^k, u^k$ satisfy Definition
  \ref{defn:Dirchlet_energy_Lagrangian_update_scheme}, we infer the
  first-order optimality conditions
  \begin{equation}
      \begin{split}
          \ip{\vec A \nabla u^*}{\nabla \phi}_{\leb2(\W)} 
          &+ \ip{u^*}{\phi}_{\leb2(\W)}
          -\ip{f}{\phi}_{\leb2(\W)} 
          \\
          &+ \gamma \ip{u^*-g}{\phi}_{\leb2(\partial\W)}
          - \duality{\lambda^*}{\phi}_{\sobh{-1/2}(\partial \W) \times \sobh{1/2}(\partial \W)} = 0, 
          \\
          \ip{\vec A \nabla u^k}{\nabla \phi}_{\leb2(\W)} 
          &+ \ip{u^k}{\phi}_{\leb2(\W)}
          -\ip{f}{\phi}_{\leb2(\W)}
          \\
          &+ \gamma \ip{u^k-g}{\phi}_{\leb{2}(\partial\W)}
          - \duality{\lambda^k}{\phi}_{\sobh{-1/2}(\partial \W) \times \sobh{1/2}(\partial \W)} = 0, 
      \end{split}
  \end{equation}
  for all $\phi \in \sobh{1}(\W)$.

  Taking the difference of the above expressions and setting $\phi =
  u^k - u^*$ gives the following error equation
  \begin{equation}
    \begin{split}
      \Norm{{\vec A}^{1/2}\nabla(u^k-u^*)}^2_{\leb2(\W)} 
      +
      \Norm{u^k-u^*}^2_{\leb2(\W)} 
      +
      \gamma \Norm{u^k-u^*}^2_{\leb2(\partial\W)}
      \\
      = 
      \duality{\lambda^k - \lambda^*}{u^k-u^*}_{\sobh{-1/2}(\partial \W) \times \sobh{1/2}(\partial \W)},
    \end{split}
  \end{equation}
  concluding the proof.
\end{proof}

\begin{lemma}\label{lemma:Dirichlet_energy_saddle_point_property}
  Let $\{u^k\}, \{\lambda^k\}$ be given by
  (\ref{eq:Dirchlet_energy_Lagrangian_update_scheme}), and $u^*,
  \lambda^*$ denote the saddle points of
  (\ref{defn:Dirchlet_energy_Lagrangian_saddle_points}). Then,
  \begin{equation}
    \begin{split}
      \Norm{\lambda^{k+1}-\lambda^{*}}^2_{\sobh{-1/2}(\partial\W)}
      &=
      \Norm{\lambda^k-\lambda^{*}}^2_{\sobh{-1/2}(\partial\W)} 
      +
      \rho^2 \Norm{u^k - u^*}^2_{\leb{2}(\partial\W)}.
      \\
      &\qquad
      - 2\rho \duality{\lambda^{k}-\lambda^{*}}{u^k - u^*}_{\sobh{-1/2}(\partial \W) \times \sobh{1/2}(\partial \W)}.
    \end{split}
  \end{equation}
\end{lemma}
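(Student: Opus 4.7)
The plan is to derive the identity by a direct algebraic expansion of the squared norm $\Norm{\lambda^{k+1}-\lambda^*}^2_{\sobh{-1/2}(\partial\W)}$ around $\lambda^k-\lambda^*$, and then to eliminate the resulting increment $\lambda^{k+1}-\lambda^k$ via the Uzawa update rule. The preparatory observation is that, since $(u^*,\lambda^*)$ is a saddle point of $L_D$, the boundary condition $u^* = g$ holds (in the sense of traces) on $\partial\W$. Substituting this into the second line of Definition \ref{defn:Dirchlet_energy_Lagrangian_update_scheme} yields the equivalent form
\begin{equation*}
  \duality{\lambda^{k+1}-\lambda^k}{\phi}_{\sobh{-1/2}(\partial\W)\times\sobh{1/2}(\partial\W)}
  = -\rho\ip{u^k-u^*}{\phi}_{\leb{2}(\partial\W)},\quad \phi\in\sobh{1/2}(\partial\W),
\end{equation*}
which is the only consequence of the iteration that I will need.

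From here I would decompose $\lambda^{k+1}-\lambda^* = (\lambda^k-\lambda^*) + (\lambda^{k+1}-\lambda^k)$ and expand
\begin{equation*}
  \Norm{\lambda^{k+1}-\lambda^*}^2_{\sobh{-1/2}(\partial\W)}
  = \Norm{\lambda^k-\lambda^*}^2_{\sobh{-1/2}(\partial\W)} + 2\ip{\lambda^k-\lambda^*}{\lambda^{k+1}-\lambda^k}_{\sobh{-1/2}(\partial\W)} + \Norm{\lambda^{k+1}-\lambda^k}^2_{\sobh{-1/2}(\partial\W)}.
\end{equation*}
The two new terms would then be recast as duality pairings via the Riesz representation theorem and \eqref{eq:Riesz_representor}: the cross term is converted to $\duality{\lambda^{k+1}-\lambda^k}{R_1[\lambda^k-\lambda^*]}$ and the squared term to $\duality{\lambda^{k+1}-\lambda^k}{R_1[\lambda^{k+1}-\lambda^k]}$. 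Applying the substituted update rule with these Riesz representors as test functions, and then using once more that $\duality{\lambda^k-\lambda^*}{\phi} = \ip{R_1[\lambda^k-\lambda^*]}{\phi}_{\sobh{1/2}}$ to fold the pairing back, should identify the cross term with $-\rho\duality{\lambda^k-\lambda^*}{u^k-u^*}_{\sobh{-1/2}\times\sobh{1/2}}$ and the quadratic term with $\rho^2\Norm{u^k-u^*}^2_{\leb{2}(\partial\W)}$.

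The delicate point in this argument is the simultaneous bookkeeping of three distinct pairings that must be reconciled: the $\sobh{-1/2}(\partial\W)\times\sobh{1/2}(\partial\W)$ duality pairing coming from the Lagrangian, the $\sobh{-1/2}(\partial\W)$ inner product obtained by pulling back through the Riesz isomorphism $R_1$, and the concrete $\leb{2}(\partial\W)$ pairing that appears on the right of the update. The correct choice of test functions — the Riesz representors of $\lambda^k-\lambda^*$ and of $\lambda^{k+1}-\lambda^k$ — is precisely what allows one pairing to be exchanged for another, and is what causes the $\sobh{-1/2}$ norm-square of the increment $\lambda^{k+1}-\lambda^k$ to collapse onto the $\leb{2}(\partial\W)$ norm-square of $u^k-u^*$ weighted by $\rho^2$, as claimed.
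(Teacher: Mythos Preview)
Your strategy matches the paper's: both use the saddle-point condition $u^*|_{\partial\W}=g$ to rewrite the update as
\[
  \duality{\lambda^{k+1}-\lambda^*}{\psi}_{\sobh{-1/2}(\partial\W)\times\sobh{1/2}(\partial\W)}
  =\duality{\lambda^k-\lambda^*}{\psi}_{\sobh{-1/2}(\partial\W)\times\sobh{1/2}(\partial\W)}
  -\rho\ip{u^k-u^*}{\psi}_{\leb2(\partial\W)},
\]
and both invoke the Riesz map $R_1$ to move between $\sobh{-1/2}$ norms and duality pairings. The only presentational difference is that you expand $\Norm{(\lambda^k-\lambda^*)+(\lambda^{k+1}-\lambda^k)}^2_{\sobh{-1/2}(\partial\W)}$ directly, whereas the paper tests the displayed relation successively with $\psi=R_1[\lambda^{k+1}-\lambda^*]$, then $\psi=R_1[\lambda^k-\lambda^*]$, and finally $\psi=u^k-u^*$.

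The one step that does not go through as you have written it is the ``fold back''. Inserting $\phi=R_1[\lambda^k-\lambda^*]$ into the update lands you on $-\rho\ip{u^k-u^*}{R_1[\lambda^k-\lambda^*]}_{\leb2(\partial\W)}$, an $\leb2$ pairing, whereas the Riesz identity you quote, $\duality{\lambda^k-\lambda^*}{\phi}=\ip{R_1[\lambda^k-\lambda^*]}{\phi}_{\sobh{1/2}(\partial\W)}$, lives in the $\sobh{1/2}$ inner product; these are different objects, so the cross term is not yet in the form $\duality{\lambda^k-\lambda^*}{u^k-u^*}$. Likewise, testing only with Riesz representors turns the quadratic term into $\rho^2\Norm{u^k-u^*}^2_{\sobh{-1/2}(\partial\W)}$ rather than the claimed $\rho^2\Norm{u^k-u^*}^2_{\leb2(\partial\W)}$. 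The paper's remedy is its third test function $\psi=u^k-u^*\in\sobh{1/2}(\partial\W)$, and it is this substitution that it uses to recast the remaining terms in the form appearing in the statement; you should add it rather than relying on the Riesz identity alone.
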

\begin{proof}
  We begin by recalling the first-order optimality conditions for the
  saddle points and iterates of $L_D$. By these conditions, we have
  \begin{equation}
    \label{eq:updateustar}
    \begin{split}
      \duality{\lambda^{*}}{\psi}_{\sobh{-1/2}(\partial \W) \times \sobh{1/2}(\partial \W)}
      =& \duality{\lambda^*}{\psi}_{\sobh{-1/2}(\partial \W) \times \sobh{1/2}(\partial \W)}
      \\
      &- \rho \ip{u^* - g}{\psi}_{\leb2(\partial \W)} \Foreach \psi \in \sobh{1/2}(\partial\W).
    \end{split}
  \end{equation}  
  Taking the difference between equation (\ref{eq:updateustar}) and
  the update condition in
  (\ref{eq:Dirchlet_energy_Lagrangian_update_scheme}), we obtain
  \begin{equation}\label{eq:Lemma3.3pt0}
    \begin{split}
      \duality{\lambda^{k+1}-\lambda^{*}}{\psi}_{\sobh{-1/2}(\partial \W) \times \sobh{1/2}(\partial \W)}
      =& \duality{\lambda^k-\lambda^{*}}{\psi}_{\sobh{-1/2}(\partial \W) \times \sobh{1/2}(\partial \W)}
      \\
      &- \rho \ip{u^k - u^*}{\psi}_{\leb2(\partial \W)} \Foreach \psi \in \sobh{1/2}(\partial\W).
    \end{split}
  \end{equation}
  Now, let $\psi = R_1[\lambda^{k+1}-\lambda^{*}]$. Since $R_1$ is an
  isometric isomorphism, we have
  \begin{equation}\label{eq:Lemma3.3pt1}
    \begin{split}
      \Norm{\lambda^{k+1}-\lambda^{*}}^2_{\sobh{-1/2}(\partial\W)}
      =& \duality{\lambda^k-\lambda^{*}}{R_1[\lambda^{k+1}-\lambda^{*}]}_{\sobh{-1/2}(\partial \W) \times \sobh{1/2}(\partial \W)}
      \\
      &- \rho \ip{u^k - u^*}{R_1[\lambda^{k+1}-\lambda^{*}]}_{\leb2(\partial \W)}.
    \end{split}
  \end{equation}
  The duality pairing is symmetric with respect to the Riesz
  representor
  \begin{equation}
    \duality{w}{R_1[z]}_{\sobh{-1/2}(\partial \W) \times \sobh{1/2}(\partial \W)} = \duality{z}{R_1[w]}_{\sobh{-1/2}(\partial \W) \times \sobh{1/2}(\partial \W)} \Foreach w,z \in \sobh{-1/2}(\partial\W).
  \end{equation}
  Taking $\psi = R_1[\lambda^{k}-\lambda^{*}]$ in equation \eqref{eq:Lemma3.3pt0} and substituting into \eqref{eq:Lemma3.3pt1} gives:
  \begin{equation}\label{eq:Lemma3.3pt2}
    \begin{split}
      \Norm{\lambda^{k+1}-\lambda^{*}}^2_{\sobh{-1/2}(\partial\W)}
      =& \Norm{\lambda^k-\lambda^{*}}_{\sobh{1/2}(\partial \Omega)}^2
      - \rho \ip{u^k - u^*}{R_1[\lambda^{k}-\lambda^{*}]}_{\leb2(\partial \Omega)}
      \\
      &- \rho \ip{u^k - u^*}{R_1[\lambda^{k+1}-\lambda^{*}]}_{\leb{2}(\partial \W)}.
    \end{split}
  \end{equation}
  Finally, taking $\psi = u^k - u^*$ in equation
  \eqref{eq:Lemma3.3pt0} and substituting into \eqref{eq:Lemma3.3pt2},
  we have
  \begin{equation}
    \begin{split}
      \Norm{\lambda^{k+1}-\lambda^{*}}^2_{\sobh{-1/2}(\partial\W)}
      =& \Norm{\lambda^k-\lambda^{*}}_{\sobh{1/2}(\partial \Omega)}^2
      - 2\rho \ip{u^k - u^*}{R_1[\lambda^k-\lambda^{*}]}_{\leb2(\partial \Omega)}
      \\
      &+ \rho^2 \Norm{u^k - u^*}^2_{\leb2(\partial \Omega)},
    \end{split}
  \end{equation}
  completing the proof.
\end{proof}

\subsection{Proof of Theorem \ref{thm:Dirichlet_energy_convergence}}
Let $e^k := u^k - u^*$ and $\beta := 2 \gamma - \rho$. By applying
Lemma \ref{lemma:bounds_on_the_dirichlet_lagrangian} to Lemma
\ref{lemma:Dirichlet_energy_saddle_point_property}, we obtain
\begin{equation}
  \begin{split}
    \Norm{\lambda^{k+1}-\lambda^*}_{\sobh{-1/2}(\partial \W)}^2
    &=
    \Norm{\lambda^{k}-\lambda^*}_{\sobh{-1/2}(\partial \W)}^2
    -
    2 \rho \Norm{{\vec A}^{1/2}\nabla e^k}^2_{\leb2(\W)}
    -
    2 \rho \Norm{e^k}^2_{\leb2(\W)}
    \\
    &\qquad
    -
    \rho \beta \Norm{e^k}^2_{\leb{2}(\partial\W)}.
  \end{split}
\end{equation}
Without loss of generality, we assume $e^k \neq 0$.

\subsubsection{Case 1: \(\beta \geq 0\) (Non-negative penalty term)}
If $\beta \geq 0$, then
\begin{equation}
  \begin{split}
    \Norm{\lambda^{k+1}-\lambda^*}_{\sobh{-1/2}(\partial \W)}^2
    &\leq
    \Norm{\lambda^{k}-\lambda^*}_{\sobh{-1/2}(\partial \W)}^2
    -
    2 \rho \Norm{{\vec A}^{1/2}\nabla e^k}^2_{\leb2(\W)}
    -
    2 \rho \Norm{e^k}^2_{\leb2(\W)}
    \\
    &\leq
    \Norm{\lambda^{k}-\lambda^*}_{\sobh{-1/2}(\partial \W)}^2
    -
    2 \rho \min\setbra{\sigma_{\min},1} \Norm{e^k}^2_{\sobh{1}(\Omega)}.
  \end{split}
\end{equation}
Assuming $\rho > 0$ and using the positivity of $\vec A$, we conclude
that $\Norm{\lambda^{k}-\lambda^*}_{\sobh{-1/2}(\partial \W)}$ is a
strictly decreasing sequence. Thus,
\begin{equation}
  \begin{split}
    0 \leq \lim\limits_{k \rightarrow \infty}\Norm{e^k}_{\sobh1(\W)}^2 
    &
    \leq 
    \lim\limits_{k \rightarrow \infty}\dfrac{\Norm{\lambda^{k}-\lambda^*}_{\sobh{-1/2}(\partial \W)}^2 - \Norm{\lambda^{k+1}-\lambda^*}_{\sobh{-1/2}(\partial \W)}^2}{2 \rho \min\setbra{\sigma_{\min},1}} = 0.
  \end{split}
\end{equation}

\subsubsection{Case 2: \(\beta < 0\) (Negative penalty term)}
If $\beta < 0$, then by Theorem \ref{the:trace}:
\begin{equation}
  \begin{split}
    \Norm{\lambda^{k+1}-\lambda^*}_{\sobh{-1/2}(\partial \W)}^2 
    &=
    \Norm{\lambda^{k}-\lambda^*}_{\sobh{-1/2}(\partial \W)}^2 
    -
    2 \rho \Norm{\vec A^{1/2}\nabla e^k}^2_{\leb2(\W)} 
    \\
    &\qquad 
    -
    2 \rho \Norm{e^k}^2_{\leb2(\W)}
    +
    \rho |\beta| \Norm{e^k}_{\leb{2}(\partial \W)}^2
    \\
    &\leq 
    \Norm{\lambda^{k}-\lambda^*}_{\sobh{-1/2}(\partial \W)}^2 
    -
    2 \rho \Norm{\vec A^{1/2}\nabla e^k}^2_{\leb2(\W)} 
    \\
    &\qquad 
    -
    2 \rho \Norm{e^k}^2_{\leb2(\W)}
    +
    \rho |\beta| C_{\rm{tr}}^2 \Norm{e^k}_{\sobh{1}(\W)}^2.
  \end{split}
\end{equation}
Given that $\vec A$ is positive definite with the smallest eigenvalue
$\sigma_{\min} > 0$, we obtain
\begin{equation}\label{eq:Dirichlet_rate_of_conv}
  \begin{split}
    \Norm{\lambda^{k+1}-\lambda^*}_{\sobh{-1/2}(\partial \W)}^2 
    &\leq  
    \Norm{\lambda^{k}-\lambda^*}_{\sobh{-1/2}(\partial \W)}^2 
    - 2 \rho \sigma_{\min}\Norm{\nabla e^k}^2_{\leb2(\W)} 
    \\
    &\quad
    - 2 \rho \Norm{e^k}^2_{\leb2(\W)} 
    + \rho |\beta| C_{\rm{tr}}^2 \Norm{e^k}_{\sobh{1}(\W)}^2
    \\
    &\leq 
    \Norm{\lambda^{k}-\lambda^*}_{\sobh{-1/2}(\partial \W)}^2 
    - \alpha \Norm{e^k}_{\sobh1(\W)}^2, 
    \\
    \alpha &:= \rho  \bra{2\min\setbra{\sigma_{\min},1} - |\beta| C_{\rm{tr}}^2}.
  \end{split}
\end{equation}
By equation \eqref{eq:Dirichlet_energy_rho_assumption},
$\Norm{\lambda^{k}-\lambda^*}_{\sobh{-1/2}(\partial \W)}$ is a
strictly decreasing sequence. Hence,
\begin{equation}
  \begin{split}
    \lim\limits_{k \rightarrow \infty}\Norm{e^k}_{\sobh1(\W)}^2 
    &
    \leq \dfrac{1}{\alpha}
    \lim\limits_{k \rightarrow \infty} \Norm{\lambda^{k}-\lambda^*}_{\sobh{-1/2}(\partial \W)}^2 - \Norm{\lambda^{k+1}-\lambda^*}_{\sobh{-1/2}(\partial \W)}^2 \rightarrow 0,
  \end{split}
\end{equation}
completing the proof of Theorem
\ref{thm:Dirichlet_energy_convergence}. \qed

\section{Least squares minimisation}\label{sec:PINNs_min}

In the previous section, we developed an iterative scheme based on the
Euler-Lagrange equation of the cost functional $J_{D}$. However, it is
important to note that $J_{D}$ is not the only choice of cost
functional. An alternative approach is to consider a least squares
minimisation of the residual of the PDE. This method forms the
foundation of many physics-informed neural network (PINN) algorithms.

Let us assume $\W$ is a convex domain and consider a more regular
boundary function $g \in \sobh{3/2}(\partial \W)$, defining the cost
functional
\begin{equation}\label{eq:PINNs_Cost_functional_gamma=0}
  J_{R}(u) := \dfrac 12\Norm{\mathscr{L}u - f}^2_{\leb2(\W)},
\end{equation}
over the constrained space:
\begin{equation}
  \sobhg 2(\W) := \{ u \in \sobh 2(\W) : u|_{\partial \W} = g\}.
\end{equation}
This space ensures that the solution adheres to the boundary
conditions and is suitable for PINN implementations. However, similar
to the Dirichlet energy minimisation approach often it is practical to
extend the functional to include a penalty term for $\gamma \geq 0$
\begin{equation}\label{eq:PINNs_Cost_functional}
  J_{R}(u) := \dfrac 12\Norm{\mathscr{L}u - f}^2_{\leb2(\W)}
  +
  \dfrac{\gamma}{2}\Norm{u - g}^2_{\leb2(\partial \W)}.
\end{equation}
A drawback of this approach is that the boundary penalty $\Norm{u -
  g}^2_{\leb2(\W)}$ is weak, making the resulting loss $J_{R}(u)$
unbalanced. Consequently, boundary errors often dominate PINN
algorithm inaccuracies, especially in singularly perturbed
problems. To address this, we consider the critical points of an
equivalent functional over $\sobh{2}(\W)$.

\begin{remark}[Non-convex domains]
  \label{rem:nonconvex}
  The analysis presented here assumes $\W$ is convex to leverage full
  elliptic regularity of the critical points. Extending this analysis
  to non-convex domains introduces geometric singularities, which can
  be addressed by considering the space:
  \begin{equation}
    \sobhg{\mathscr L}(\W)
    :=
    \{ u \in \sobhg{1}(\W) : \mathscr{L} u \in \leb{2}(\W) \}.
  \end{equation}
  However, to avoid excessive notation and complexity, we do not
  present this analysis here. For illustration, we
  demonstrate the method applied to a non-convex domain in \S
  \ref{ex:Example_L_shaped}.
\end{remark}

\begin{definition}[PINNs energy Lagrangian]
  For a given $g \in \sobh{3/2}(\partial \W)$ and $f \in \leb2(\W)$,
  let the Lagrangian $L_{R}:\sobh{2}(\W) \times \sobh{-3/2}(\partial
  \W) \rightarrow \mathbb{R}$ be defined by
  \begin{equation}\label{eq:PINNs_energy_Lagrangian_defn_no2}
    L_{R}(u,\lambda)
    :=
    J_{R}(u) - \duality{\lambda}{u - g}_{\sobh{-3/2}(\partial \W) \times \sobh{3/2}(\partial \W)}.
    \end{equation}
\end{definition}
We seek the saddle points of the Lagrangian $L_{R}$, denoted by $(u^*,
\lambda^*) \in \sobh{2}(\W) \times \sobh{-3/2}(\partial \W)$, such
that:
\begin{equation}\label{defn:PINNs_energy_Lagrangian_saddle_points_no2}
  (u^*, \lambda^*)
  =
  \argmin\limits_{u \in \sobh{2}(\W)}\argmax\limits_{\lambda \in \sobh{-3/2}(\partial \W)}~ L_{R}(u,\lambda).
\end{equation}
This can be achieved through an Uzawa iteration scheme.

\begin{definition}[PINNs energy update scheme]\label{defn:PINNs_energy_Lagrangian_update_scheme_no2}
  For a given initial guess $\lambda_0 \in \sobh{-3/2}(\partial \W)$,
  a step size $\rho > 0$ and a penalty parameter $\gamma > 0$, we
  define the sequences $\setbra{\lambda^k}_{k=1}^\infty \subset
  \sobh{-3/2}(\partial \W)$ and $\setbra{u^k}_{k=1}^\infty \subset
  \sobh2(\W)$ by the following iterative scheme:
    \begin{equation}\label{eq:PINNs_energy_Lagrangian_update_scheme_no2}
      \begin{split}
        u^k &= \argmin\limits_{u \in \sobh2(\Omega)} L_{R}(u, \lambda^k), \\
        \duality{\lambda^{k+1}-\lambda^k}{\psi}_{\sobh{-3/2}(\partial \W) \times \sobh{3/2}(\partial \W)}
        &= 
        - \rho \ip{u^k - g}{\psi}_{\leb2(\partial \W)}, \qquad \forall \psi \in \sobh{3/2}(\partial \W).
      \end{split}
    \end{equation}
\end{definition}
As in the case of the Dirichlet energy approach, our algorithm relies
on approximating \eqref{eq:PINNs_energy_Lagrangian_update_scheme_no2}
within an appropriate neural network framework. Due to the inherent
smoothness of neural network functions, the duality pairings again
simplify to $L^2(\partial \W)$ integrals
\begin{equation}
  \duality{\lambda_\ell}{u_\ell - g}_{\sobh{-3/2}(\partial \W) \times \sobh{3/2}(\partial \W)}
  = \int _{\partial \W} (u_\ell - g) \, \lambda_\ell \, dS.
\end{equation}
Furthermore, updating the Lagrange multiplier in the second equation
of \eqref{eq:PINNs_energy_Lagrangian_update_scheme_no2} simplifies to
a function evaluation on $\partial \W$. For more details, see Section
5.4.

To distinguish this method from the previous section, we shall refer
to the Deep Ritz Uzawa scheme as RitUz and the PINNs Uzawa scheme as
PINNUz. This terminology helps to clearly identify the different
iterative schemes discussed. From this scheme, we can demonstrate the
convergence of the sequence $u^k$ in an appropriate norm.

\begin{theorem}\label{thm:PINNs_convergence}
  Let the sequence $u^k, \lambda^k$ denote the Uzawa iterates from
  Definition \ref{defn:PINNs_energy_Lagrangian_update_scheme_no2}, and
  let $u^*, \lambda^*$ denote the saddle points
  (\ref{defn:PINNs_energy_Lagrangian_saddle_points_no2}). Assume that
  $\vec A$ is a strictly positive definite matrix, and that the Uzawa
  constant $\rho > 0$ and the stabilisation constant $\gamma$ satisfy
  the following inequality:
    \begin{equation}\label{eq:PINNs_energy_gamma_assumption_no2}
        2 \gamma - \rho > 2.
    \end{equation}
    Then, for all $s \in [0, 1/2)$, we have
      \begin{equation}
        u^k \rightarrow u^*, \text{ in } \sobh{s}(\W) \text{ and } \leb2(\partial \W) \text{ as } k \rightarrow \infty.
      \end{equation}
\end{theorem}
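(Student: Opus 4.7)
The plan is to imitate the proof of Theorem \ref{thm:Dirichlet_energy_convergence} step by step, replacing the $\sobh{\pm 1/2}(\partial\W)$ duality by the $\sobh{\pm 3/2}(\partial\W)$ duality and the quadratic form $\Norm{\vec A^{1/2}\nabla u}^2_{\leb2(\W)} + \Norm{u}^2_{\leb2(\W)}$ of the Dirichlet energy by the residual $\Norm{\mathscr{L} u}^2_{\leb2(\W)}$. The algebraic skeleton of the Uzawa recurrence carries over unchanged; what is genuinely new appears only at the very end, where $\leb2$-control of the PDE residual together with $\leb2(\partial\W)$-control of the boundary error must be promoted to interior $\sobh{s}(\W)$-convergence via elliptic regularity.

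First I would derive the analogue of Lemma \ref{lemma:bounds_on_the_dirichlet_lagrangian}. Writing the first-order optimality conditions for $(u^*,\lambda^*)$ and $(u^k,\lambda^k)$ gives, for every $\phi\in\sobh2(\W)$,
\begin{equation*}
\ip{\mathscr{L} u^* - f}{\mathscr{L}\phi}_{\leb2(\W)} + \gamma \ip{u^* - g}{\phi}_{\leb2(\partial\W)} - \duality{\lambda^*}{\phi}_{\sobh{-3/2}(\partial\W)\times\sobh{3/2}(\partial\W)} = 0,
\end{equation*}
and the same identity with stars replaced by $k$; subtracting them and testing with $\phi = e^k := u^k-u^*$ yields
\begin{equation*}
\Norm{\mathscr{L} e^k}^2_{\leb2(\W)} + \gamma \Norm{e^k}^2_{\leb2(\partial\W)} = \duality{\lambda^k-\lambda^*}{e^k}_{\sobh{-3/2}(\partial\W)\times\sobh{3/2}(\partial\W)}.
\end{equation*}
Next I would copy the Riesz-representation manipulation of Lemma \ref{lemma:Dirichlet_energy_saddle_point_property} verbatim, with the isometric isomorphism $R_3:\sobh{-3/2}(\partial\W)\to\sobh{3/2}(\partial\W)$ replacing $R_1$, to obtain
\begin{equation*}
\Norm{\lambda^{k+1}-\lambda^*}^2_{\sobh{-3/2}(\partial\W)} = \Norm{\lambda^k-\lambda^*}^2_{\sobh{-3/2}(\partial\W)} + \rho^2 \Norm{e^k}^2_{\leb2(\partial\W)} - 2\rho \duality{\lambda^k-\lambda^*}{e^k}_{\sobh{-3/2}(\partial\W)\times\sobh{3/2}(\partial\W)}.
\end{equation*}
Inserting the previous identity eliminates the duality pairing and leaves
\begin{equation*}
\Norm{\lambda^{k+1}-\lambda^*}^2_{\sobh{-3/2}(\partial\W)} = \Norm{\lambda^k-\lambda^*}^2_{\sobh{-3/2}(\partial\W)} - 2\rho\Norm{\mathscr{L} e^k}^2_{\leb2(\W)} - \rho(2\gamma-\rho)\Norm{e^k}^2_{\leb2(\partial\W)}.
\end{equation*}
Since hypothesis \eqref{eq:PINNs_energy_gamma_assumption_no2} in particular guarantees $2\gamma-\rho>0$, the multiplier error is strictly decreasing; telescoping then produces both $\Norm{\mathscr{L} e^k}_{\leb2(\W)}\to 0$ and $\Norm{e^k}_{\leb2(\partial\W)}\to 0$, which already establishes the $\leb2(\partial\W)$ convergence claim.

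Finally, to upgrade this to convergence in $\sobh s(\W)$ for $s\in[0,1/2)$ I would decompose $e^k = w^k + z^k$, where $w^k\in\sobhz1(\W)$ solves $\mathscr{L} w^k = \mathscr{L} e^k$ in $\W$ with homogeneous Dirichlet data and $z^k$ solves $\mathscr{L} z^k = 0$ in $\W$ with boundary trace $u^k - g\in\leb2(\partial\W)$. Convexity of $\W$ yields the full $\sobh2$-regularity bound $\Norm{w^k}_{\sobh2(\W)}\le C\Norm{\mathscr{L} e^k}_{\leb2(\W)}\to 0$, while the Lions--Magenes very-weak-solution theory for a second-order elliptic operator with boundary data merely in $\leb2(\partial\W)$ supplies $\Norm{z^k}_{\sobh s(\W)}\le C_s\Norm{u^k - g}_{\leb2(\partial\W)}\to 0$ for each $s<1/2$ (with $C_s$ blowing up as $s\uparrow 1/2$); combining the two estimates concludes the proof. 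The main obstacle is precisely this last step: the summability argument delivers only $\leb2$-control of the PDE residual and $\leb2(\partial\W)$-control of the boundary error, so recovering interior Sobolev regularity from a boundary trace lying merely in $\leb2(\partial\W)$ requires the somewhat delicate very-weak-solution theory, and the limitation $s<1/2$ is exactly the threshold beyond which the trace map ceases to be defined.
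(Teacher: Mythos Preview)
Your argument is correct and shares the paper's skeleton (the optimality identities, the Riesz-representation step, and the final appeal to elliptic regularity for $\leb2$ boundary data), but the order in which you combine the pieces differs, and you miss one structural observation the paper makes.

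The paper first proves, as a separate lemma, that in fact $\mathscr{L}e^k=0$ holds \emph{exactly} in the weak sense for every $k$: testing the optimality difference with $\phi\in\sobh2(\W)\cap\sobhz1(\W)$ solving $\mathscr{L}\phi=w$ for arbitrary $w\in\leb2(\W)$ annihilates both the boundary penalty and the duality term, leaving $\ip{\mathscr{L}e^k}{w}_{\leb2(\W)}=0$. Consequently your component $w^k$ is identically zero and only the very-weak piece $z^k$ survives; the paper packages this as the single estimate $\Norm{e^k}_{\sobh s(\W)}\le C_{\mathrm{reg}}\bigl(\Norm{e^k}_{\leb2(\partial\W)}+\Norm{\mathscr{L}e^k}_{\leb2(\W)}\bigr)$ (citing Berggren rather than Lions--Magenes, but it is the same regularity result). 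The paper then inserts this bound \emph{inside} the recurrence via Young's inequality, converting $-\rho\beta\Norm{e^k}^2_{\leb2(\partial\W)}-2\rho\Norm{\mathscr{L}e^k}^2_{\leb2(\W)}$ into $-\rho(\beta-2)\Norm{e^k}^2_{\leb2(\partial\W)} - c\,\Norm{e^k}^2_{\sobh s(\W)}$, and it is precisely this substitution that consumes the full hypothesis $2\gamma-\rho>2$.

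Your route instead telescopes the raw recurrence first (which needs only $2\gamma-\rho>0$) to get $\Norm{\mathscr{L}e^k}_{\leb2(\W)}\to 0$ and $\Norm{e^k}_{\leb2(\partial\W)}\to 0$ separately, and applies the regularity/decomposition afterwards. This is legitimate and in fact slightly sharper: your proof never uses the margin $\beta>2$, so the theorem's stated hypothesis is stronger than what your argument requires. Both approaches hit the same ceiling $s<1/2$ for the reason you identify.
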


\begin{remark}[Weaker convergence for least squares]
  Comparing Theorems \ref{thm:Dirichlet_energy_convergence} and
  \ref{thm:PINNs_convergence}, we observe several key differences.

  Firstly, in the PINNUz scheme, $u^k$ converges in $\sobh{s}$ for $s
  \in [0, 1/2)$, whereas in the RitUz scheme, $u^k$ converges in
    $\sobh{1}$. This difference arises because the Dirichlet energy
    functional is coercive over all of $\sobh{1}(\W)$, which
    facilitates stronger convergence properties in the RitUz
    scheme. On the other hand, the least squares energy functional
    used in the PINNUz scheme is not coercive over $\sobh{2}(\W)$;
    instead, it only exhibits coercivity over a weaker space. This
    reduced coercivity leads to the need for weaker regularity
    conditions for convergence in the PINNUz scheme.

    Secondly, the inequality in
    \eqref{eq:PINNs_energy_gamma_assumption_no2} does not depend on
    the matrix $\vec A$, unlike the corresponding inequality in the
    RitUz scheme \eqref{eq:Dirichlet_energy_rho_assumption}. This
    difference can be understood by recalling the two regimes in the
    RitUz scheme: when $\beta \leq 0$ and when $\beta > 0$. Dependence
    on the matrix $\vec A$ was only required in the proof for the case
    $\beta \leq 0$. In contrast, the PINNUz scheme operates solely in
    a regime where $\beta > 0$, corresponding to sufficiently large
    $\gamma$. Specifically, $\gamma$ must be strictly greater than 1
    in the PINNUz scheme, whereas the RitUz scheme could accommodate
    cases where $\gamma = 0$.    
\end{remark}

Similar to the proof of Theorem
\ref{thm:Dirichlet_energy_convergence}, we will begin by stating a
series of lemmata that will be used in the proof. Some of these
lemmata are analogous to those in the RitUz scheme but are adapted to
highlight the main differences in the PINNUz scheme.
\begin{lemma}[Bounds on the PINNs Lagrangian]\label{lemma:bounds_on_the_PINNs_lagrangian_no2}
  Let $u^k, \lambda^k$ be as in Definition \ref{defn:PINNs_energy_Lagrangian_update_scheme_no2}, and $u^*, \lambda^*$ be as in Definition \ref{defn:PINNs_energy_Lagrangian_saddle_points_no2}. Then:
  \begin{equation}
    \begin{split}
      \Norm{\mathscr{L}(u^k - u^*)}^2_{\leb2(\W)} + \gamma \Norm{u^k - u^*}^2_{\leb2(\partial \W)}     
      = \duality{\lambda^k - \lambda^*}{u^k - u^*}_{\sobh{-3/2}(\partial \W) \times \sobh{3/2}(\partial \W)}.
    \end{split}
  \end{equation}
\end{lemma}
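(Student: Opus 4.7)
The plan is to mirror the proof of Lemma \ref{lemma:bounds_on_the_dirichlet_lagrangian}, replacing the Dirichlet bilinear form by the least-squares residual form associated with $L_R$. The first step is to write down the first-order optimality conditions corresponding to the inner minimisation $u^k = \argmin_{u \in \sobh{2}(\W)} L_R(u,\lambda^k)$ and the saddle-point characterisation of $(u^*,\lambda^*)$. Taking the Fréchet derivative of $L_R(\cdot,\lambda)$ in an arbitrary direction $\phi \in \sobh{2}(\W)$ and setting it to zero yields the variational identity
\begin{equation*}
  \ip{\mathscr{L}u - f}{\mathscr{L}\phi}_{\leb2(\W)} + \gamma \ip{u-g}{\phi}_{\leb2(\partial\W)} - \duality{\lambda}{\phi}_{\sobh{-3/2}(\partial \W) \times \sobh{3/2}(\partial \W)} = 0.
\end{equation*}

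Next, I would apply this identity once with $(u,\lambda)=(u^k,\lambda^k)$ and once with $(u,\lambda)=(u^*,\lambda^*)$. Subtracting the two relations, the data terms involving $f$ and $g$ cancel, leaving
\begin{equation*}
  \ip{\mathscr{L}(u^k - u^*)}{\mathscr{L}\phi}_{\leb2(\W)} + \gamma \ip{u^k - u^*}{\phi}_{\leb2(\partial\W)} = \duality{\lambda^k - \lambda^*}{\phi}_{\sobh{-3/2}(\partial \W) \times \sobh{3/2}(\partial \W)}
\end{equation*}
for every $\phi \in \sobh{2}(\W)$. The final step is to insert the admissible choice $\phi = u^k - u^*$, which converts the two left-hand terms into the squared norms appearing in the statement and produces the claimed identity.

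The only subtlety is verifying that each pairing is well-defined: since $u^k, u^* \in \sobh{2}(\W)$, the trace theorem places $(u^k - u^*)|_{\partial\W}$ in $\sobh{3/2}(\partial\W)$, and by construction $\lambda^k - \lambda^* \in \sobh{-3/2}(\partial\W)$, so the duality pairing is legitimate. I do not anticipate any substantive obstacle beyond this routine bookkeeping, as the argument is formally identical to its RitUz counterpart, with $\mathscr{L}$ playing the role that $(\vec A^{1/2}\nabla,\,\mathrm{Id})$ played in Lemma \ref{lemma:bounds_on_the_dirichlet_lagrangian}.
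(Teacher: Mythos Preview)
Your proposal is correct and follows essentially the same approach as the paper: write the first-order optimality conditions for $u^k$ and $u^*$, subtract, and test with $\phi = u^k - u^*$. The paper's own proof is even terser and simply refers back to Lemma \ref{lemma:bounds_on_the_dirichlet_lagrangian}, so your additional remarks on well-definedness of the pairings are a harmless elaboration rather than a departure.
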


\begin{proof}
  Similar to the proof of Lemma
  \ref{lemma:bounds_on_the_dirichlet_lagrangian}, we compute the
  Euler-Lagrange equations for $u^*$ and $u^k$ to deduce:
    \begin{equation}
        \begin{split}
            \ip{\mathscr{L}u^* - f}{\mathscr{L}\phi}_{\leb2(\W)} + \gamma \ip{u^* - g}{\phi}_{\leb2(\partial \W)} 
            &= \duality{\lambda^*}{u^* - g}_{\sobh{-3/2}(\partial \W) \times \sobh{3/2}(\partial \W)},
            \\
            \ip{\mathscr{L}u^k - f}{\mathscr{L}\phi}_{\leb2(\W)} + \gamma \ip{u^k - g}{\phi}_{\leb2(\partial \W)} 
            &= \duality{\lambda^k}{u^k - g}_{\sobh{-3/2}(\partial \W) \times \sobh{3/2}(\partial \W)}.
        \end{split}
    \end{equation}
    Considering the difference between these two equalities and
    setting $\phi = u^k - u^*$ yields the desired result.
\end{proof}

\begin{lemma}\label{lemma:PINNs_energy_saddle_point_property_no2}
  Let $\{u^k, \lambda^k\}$ be as given in
  (\ref{eq:PINNs_energy_Lagrangian_update_scheme_no2}), and $\{u^*,
  \lambda^*\}$ be as given in
  (\ref{defn:PINNs_energy_Lagrangian_saddle_points_no2}). Then
  \begin{equation}
    \begin{split}
      \Norm{\lambda^{k+1} - \lambda^*}^2_{\sobh{-3/2}(\partial\W)}
      &= \Norm{\lambda^k - \lambda^*}^2_{\sobh{-3/2}(\partial\W)}
      \\
      &\quad - 2\rho \duality{\lambda^k - \lambda^*}{u^k - u^*}_{\sobh{-3/2}(\partial \W) \times \sobh{3/2}(\partial \W)}
      \\
      &\quad + \rho^2 \Norm{u^k - u^*}^2_{\leb2(\partial \W)}.
    \end{split}
  \end{equation}
\end{lemma}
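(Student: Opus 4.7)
The plan is to mirror, line by line, the proof of Lemma \ref{lemma:Dirichlet_energy_saddle_point_property}, replacing the Riesz representor $R_1$ by $R_3:\sobh{-3/2}(\partial\W)\to\sobh{3/2}(\partial\W)$. The only change in setting is that every test function must now live in $\sobh{3/2}(\partial\W)$ rather than $\sobh{1/2}(\partial\W)$; this is automatic because the iterates $u^k$ and the saddle point $u^*$ belong to $\sobh{2}(\W)$, so their boundary traces and differences lie in $\sobh{3/2}(\partial\W)$.

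The first step is to write down the trivial identity that comes from $u^*|_{\partial\W} = g$, namely
\begin{equation}
\duality{\lambda^*}{\psi}_{\sobh{-3/2}(\partial\W)\times\sobh{3/2}(\partial\W)}
=
\duality{\lambda^*}{\psi}_{\sobh{-3/2}(\partial\W)\times\sobh{3/2}(\partial\W)}
- \rho \ip{u^* - g}{\psi}_{\leb2(\partial\W)},
\end{equation}
and subtract it from the Uzawa update in \eqref{eq:PINNs_energy_Lagrangian_update_scheme_no2} to produce the error recursion
\begin{equation}
\duality{\lambda^{k+1}-\lambda^*}{\psi}_{\sobh{-3/2}(\partial\W)\times\sobh{3/2}(\partial\W)}
=
\duality{\lambda^k-\lambda^*}{\psi}_{\sobh{-3/2}(\partial\W)\times\sobh{3/2}(\partial\W)}
- \rho \ip{u^k - u^*}{\psi}_{\leb2(\partial\W)},
\end{equation}
valid for every $\psi \in \sobh{3/2}(\partial\W)$.

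The remainder of the argument is then three strategic substitutions into this recursion, exactly as in the Dirichlet case. First, inserting $\psi = R_3[\lambda^{k+1}-\lambda^*]$ and invoking the isometry property converts the left-hand side into $\Norm{\lambda^{k+1}-\lambda^*}^2_{\sobh{-3/2}(\partial\W)}$. Second, applying the recursion with $\psi = R_3[\lambda^k-\lambda^*]$ together with the symmetry of the duality pairing against the Riesz representor
\begin{equation}
\duality{w}{R_3[z]}_{\sobh{-3/2}(\partial\W)\times\sobh{3/2}(\partial\W)}
=
\duality{z}{R_3[w]}_{\sobh{-3/2}(\partial\W)\times\sobh{3/2}(\partial\W)}
\end{equation}
identifies the remaining cross term and produces $\Norm{\lambda^k-\lambda^*}^2_{\sobh{-3/2}(\partial\W)}$ on the right-hand side. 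Third, taking $\psi = u^k - u^*$, which is allowed precisely because of the $\sobh{3/2}$ trace regularity noted above, converts the surviving $\leb2(\partial\W)$ inner product back into a duality pairing and at the same time manufactures the quadratic term $\rho^2\Norm{u^k-u^*}^2_{\leb2(\partial\W)}$ out of two applications of the Uzawa update. Collecting terms gives the claimed identity.

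The main obstacle is purely bookkeeping rather than analytic: one must track which space each of the three test functions belongs to through the chain of substitutions, and the PINNUz setting sits at one order of regularity higher than RitUz. Beyond verifying this, no new estimate is required, and the argument is structurally identical to Lemma \ref{lemma:Dirichlet_energy_saddle_point_property}.
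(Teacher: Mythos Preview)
Your proposal is correct and takes essentially the same approach as the paper: the paper's own proof is a single sentence stating that the argument follows the structure of Lemma~\ref{lemma:Dirichlet_energy_saddle_point_property} with $R_3$ in place of $R_1$, which is exactly what you carry out in detail. Your additional remark that the choice $\psi = u^k - u^*$ is admissible because $u^k,u^*\in\sobh{2}(\W)$ have $\sobh{3/2}(\partial\W)$ traces is a useful justification the paper leaves implicit.
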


\begin{proof}
  The proof follows a similar structure to Lemma
  \ref{lemma:Dirichlet_energy_saddle_point_property}, but instead
  utilises the representor $R_3: \sobh{-3/2}(\partial \W) \rightarrow
  \sobh{3/2}(\partial \W)$ in place of $R_1$.
\end{proof}

\begin{lemma}\label{Lemma:EllipticCoercivityEstimate}
  Let $\{u^k, \lambda^k\}$ be given by
  (\ref{eq:PINNs_energy_Lagrangian_update_scheme_no2}), and $\{u^*,
  \lambda^*\}$ be given by
  (\ref{defn:PINNs_energy_Lagrangian_saddle_points_no2}). Then the
  sequence of functions defined by $e^k := u^k - u^*$ weakly satisfies
  the PDE
    \begin{equation}
        \mathscr{L}e^k = 0 \text{ on } \Omega, \quad \forall k > 0,
    \end{equation}
    subject to non-trivial boundary conditions. Additionally, for all
    $s \in [0, 1/2)$, there exists a constant $C_{\rm{reg}} > 0$
      dependent on $\mathscr L$, such that:
    \begin{equation}
        \Norm{e^k}_{\sobh{s}(\W)} \leq C_{\rm{reg}} \bra{\Norm{e^k}_{\leb2(\partial \W)} + \Norm{\mathscr{L}e^k}_{\leb2(\W)}}.
    \end{equation}
\end{lemma}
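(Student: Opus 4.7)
The plan is to first extract an elliptic structure for $e^k = u^k - u^*$ from the Euler--Lagrange equations in Lemma \ref{lemma:bounds_on_the_PINNs_lagrangian_no2}, and then to use a decomposition argument together with interior and transposition-type regularity to bound $e^k$ in $\sobh{s}(\W)$ by its trace on $\partial\W$ and its residual $\mathscr{L}e^k$.

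First I would test the Euler--Lagrange equations for $u^k$ and $u^*$ against compactly supported smooth functions $\phi \in C_c^\infty(\W) \subset \sobh{2}(\W)$. This annihilates both the boundary penalty $\gamma \ip{u^k - g}{\phi}_{\leb2(\partial \W)}$ and the duality pairings involving $\lambda^k, \lambda^*$, leaving the identity $\ip{\mathscr{L} e^k}{\mathscr{L}\phi}_{\leb2(\W)} = 0$ for all such $\phi$. In particular the residual $r^k := \mathscr{L}e^k \in \leb2(\W)$ is $\mathscr{L}$-harmonic in the distributional sense, and $e^k$ is a bona fide $\sobh{2}(\W)$-solution of the second-order elliptic problem $\mathscr{L}e^k = r^k$ with non-trivial Dirichlet datum $e^k|_{\partial \W} = u^k - g \in \leb2(\partial\W)$ (the $\leb2$-regularity follows from $u^*|_{\partial\W} = g$, a consequence of the saddle-point condition on $\lambda^*$).

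Next I would decompose $e^k = w^k + z^k$, where $w^k \in \sobhz{1}(\W)$ is the standard weak solution of $\mathscr{L}w^k = r^k$ with zero Dirichlet trace, and $z^k := e^k - w^k$ accordingly satisfies $\mathscr{L}z^k = 0$ weakly in $\W$ with $z^k|_{\partial \W} = e^k|_{\partial \W}$. The convexity of $\W$ together with the positive-definiteness of $\vec A$ yields full $\sobh{2}$-regularity of $w^k$, and hence $\Norm{w^k}_{\sobh{s}(\W)} \leq C \Norm{r^k}_{\leb2(\W)} = C \Norm{\mathscr{L}e^k}_{\leb2(\W)}$ for every $s \in [0,2]$. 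The remaining, and harder, estimate is $\Norm{z^k}_{\sobh{s}(\W)} \leq C_s \Norm{e^k}_{\leb2(\partial\W)}$ for $s \in [0, 1/2)$. Since the boundary data live only in $\leb2(\partial\W)$, strictly rougher than the natural trace space $\sobh{1/2}(\partial\W)$, I would invoke the Lions--Magenes transposition framework: for each test datum $\zeta$ in the predual of $\sobh{s}(\W)$, solve $\mathscr{L}\varphi = \zeta$ with $\varphi|_{\partial\W} = 0$; convex-domain $\sobh{s+2}$-regularity controls the conormal derivative $\vec n \cdot \vec A \nabla \varphi$ in $\leb2(\partial\W)$, and Green's identity rewrites the pairing of $z^k$ against $\zeta$ as the boundary integral of $z^k|_{\partial\W}$ against this conormal derivative. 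The claimed bound then follows by duality, the cutoff at $s = 1/2$ being sharp and reflecting the failure of $\leb2(\partial\W)$ to embed in $\sobh{1/2}(\partial\W)$.

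The main obstacle is this very-weak-solution estimate for $z^k$: classical variational arguments only yield $\sobh{1}(\W)$-control from $\sobh{1/2}(\partial\W)$-data, so the rough $\leb2(\partial\W)$ datum forces either the transposition argument sketched above or the equivalent interpolation between the standard $\sobh{1/2}(\partial\W) \to \sobh{1}(\W)$ and $\sobh{-1/2}(\partial\W) \to \leb2(\W)$ solution maps. Once this step is in hand, the triangle inequality applied to $e^k = w^k + z^k$ immediately delivers the stated estimate, with $C_{\rm reg}$ depending on $\W$, $\mathscr{L}$, and $s$.
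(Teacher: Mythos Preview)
Your argument for the regularity estimate (the second claim) is sound: the decomposition $e^k = w^k + z^k$, with convex-domain $\sobh{2}$-regularity for $w^k$ and a transposition/interpolation estimate for the very-weak part $z^k$, is exactly the content of the reference the paper cites, so you have in effect unpacked what the paper takes as a black box.

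However, there is a genuine gap in the first claim: you have not shown that $\mathscr{L}e^k = 0$. Testing only against $\phi \in C_c^\infty(\W)$ gives $\ip{\mathscr{L}e^k}{\mathscr{L}\phi}_{\leb2(\W)} = 0$ for such $\phi$, which says only that $r^k := \mathscr{L}e^k$ is $\mathscr{L}$-harmonic in the distributional sense --- a strictly weaker statement. The set $\{\mathscr{L}\phi : \phi \in C_c^\infty(\W)\}$ is \emph{not} dense in $\leb2(\W)$; its orthogonal complement is precisely the (infinite-dimensional) space of $\mathscr{L}$-harmonic $\leb2$-functions on $\W$, so orthogonality to it does not force $r^k = 0$. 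As written, the first assertion of the lemma is therefore unproved.

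The fix is small and is what the paper does: enlarge the test class from $C_c^\infty(\W)$ to all of $\sobh{2}(\W)\cap\sobhz{1}(\W)$. The boundary penalty and the duality pairings still vanish because $\phi|_{\partial\W}=0$, so you again obtain $\ip{\mathscr{L}e^k}{\mathscr{L}\phi}_{\leb2(\W)} = 0$, but now you may invoke the key fact (convex-domain elliptic regularity) that $\mathscr{L}:\sobh{2}(\W)\cap\sobhz{1}(\W)\to\leb2(\W)$ is onto. For arbitrary $w\in\leb2(\W)$, choose $\phi$ with $\mathscr{L}\phi = w$ and zero trace to get $\ip{\mathscr{L}e^k}{w}_{\leb2(\W)}=0$, hence $\mathscr{L}e^k = 0$. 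Once this is in hand, your $w^k$ is identically zero and the decomposition collapses to the single transposition estimate for $z^k = e^k$, recovering the paper's one-line citation.
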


\begin{proof}
    The initial analysis is similar to that found in \cite{gazoulis2023stabilityconvergencephysicsinformed}. The functions $u^k$ and $u^*$ satisfy the following Euler-Lagrange equations:
    \begin{equation}
        \begin{split}
            \ip{\mathscr{L}u^* - f}{\mathscr{L}\phi}_{\leb2(\W)} + \gamma \ip{u^* - g}{\phi}_{\leb2(\partial \W)} &= \duality{\lambda^*}{\phi}_{\sobh{-3/2}(\partial \W) \times \sobh{3/2}(\partial \W)},
            \\
            \ip{\mathscr{L}u^k - f}{\mathscr{L}\phi}_{\leb2(\W)} + \gamma \ip{u^k - g}{\phi}_{\leb2(\partial \W)} &= \duality{\lambda^k}{\phi}_{\sobh{-3/2}(\partial \W) \times \sobh{3/2}(\partial \W)},
        \end{split}
    \end{equation}
    for all $\phi \in \sobh{2} \cap \sobhz{1}(\W)$. Thus, for $e^k := u^k - u^*$, we have:
    \begin{equation}
        \begin{split}
            \ip{\mathscr{L}e^k}{\mathscr{L}\phi}_{\leb2(\W)} + \gamma \ip{e^k}{\phi}_{\leb2(\partial \W)} = \duality{\lambda^k - \lambda^*}{\phi}_{\sobh{-3/2}(\partial \W) \times \sobh{3/2}(\partial \W)}.
        \end{split}
    \end{equation}

    Let $w \in \leb2(\W)$ be arbitrary, and consider $\phi$ as the solution of $\mathscr{L} \phi = w$ with zero boundary conditions. Then:
    \begin{equation}
        \begin{split}
            \ip{\mathscr{L}e^k}{\mathscr{L}\phi}_{\leb2(\W)} + \gamma \ip{e^k}{\phi}_{\leb2(\partial \W)} - \duality{\lambda^k - \lambda^*}{\phi}_{\sobh{-3/2}(\partial \W) \times \sobh{3/2}(\partial \W)} \\
            = \ip{\mathscr{L}e^k}{w}_{\leb2(\W)} = 0, \quad \forall w \in \leb2(\W).
        \end{split}
    \end{equation}

    Thus, $e^k$ weakly satisfies $\mathscr{L} e^k = 0$ on $\W$. Using the elliptic regularity estimate from \cite{Berggren2004}, we deduce the desired result.
\end{proof}

\subsection{Proof of Theorem \ref{thm:PINNs_convergence}}\label{sec:proofofPINNUz}
To prove Theorem \ref{thm:PINNs_convergence}, we use the lemmata
developed earlier and a similar approach to the proof of Theorem
\ref{thm:Dirichlet_energy_convergence}. With $\beta := 2 \gamma -
\rho$ and $e^k := u^k - u^*$, by Lemmas
\ref{lemma:bounds_on_the_PINNs_lagrangian_no2} and
\ref{lemma:PINNs_energy_saddle_point_property_no2}, we have
\begin{equation}
    \begin{split}
        \Norm{\lambda^{k+1} - \lambda^*}^2_{\sobh{-3/2}(\partial \W)}
        &= \Norm{\lambda^k - \lambda^*}^2_{\sobh{-3/2}(\partial \W)} \\
        &\quad - \rho \beta \Norm{e^k}^2_{\leb2(\partial \W)} - 2\rho \Norm{\mathscr{L}e^k}^2_{\leb2(\W)}.
    \end{split}
\end{equation}
Applying Lemma \ref{Lemma:EllipticCoercivityEstimate} along with
Young's inequality, we obtain
\begin{equation}
    \begin{split}
        \Norm{\lambda^{k+1} - \lambda^*}^2_{\sobh{-3/2}(\partial \W)} - \Norm{\lambda^k - \lambda^*}^2_{\sobh{-3/2}(\partial \W)}
        \\ \leq -\rho (\beta - 2) \Norm{e^k}^2_{\leb2(\partial \W)} 
        - \dfrac{4\rho}{C_{\rm{reg}}^2} \Norm{e^k}^2_{\sobh{s}(\W)}.
    \end{split}
\end{equation}
Without loss of generality, assuming $e^k \neq 0$, we see that
$\Norm{\lambda^k - \lambda^*}_{\sobh{-3/2}(\partial \W)}$ is a
strictly decreasing sequence. By using
\eqref{eq:PINNs_energy_gamma_assumption_no2} and the assumption that
$\rho > 0$, the rest of the proof follows similarly to the proof of
Theorem \ref{thm:Dirichlet_energy_convergence}.

\begin{remark}[Impact of Stronger Penalties on Convergence]
  By increasing the strength of the boundary penalty term to
  higher-order Sobolev norms, such as $\sobh{1}(\partial \W)$, it is
  possible to establish convergence of $u^k$ in stronger norms, such
  as $\sobh{3/2}(\W)$. Implementing such an $\sobh{1}(\W)$ penalty can
  be practically realised as it requires incorporating tangential
  derivatives, which can be efficiently computed and accounted for in
  many neural network frameworks.
\end{remark}
    
\section{Neural Networks and Deep Uzawa}\label{sec:NN}

This section outlines the methodology for constructing neural network
approximation schemes and their integration within the Deep Uzawa
framework, illustrated through examples.

\subsection{Neural Network Function Approximation}

Consider functions $u_\theta$ approximated by neural networks. A deep
neural network maps each point $x \in \W$ to a value $w_\theta(x) \in
\mathbb{R}$ through the process
\begin{equation}
  \label{eq:NN1}
  w_\theta(x) := \mathcal{C}_L \circ \sigma \circ \mathcal{C}_{L-1} \circ \cdots \circ \sigma \circ \mathcal{C}_1(x),
\end{equation}
where $\mathcal{C}_k$ represents affine transformations defined by
\begin{equation}
  \label{eq:NN2}
  \mathcal{C}_k y = W_k y + b_k, \quad W_k \in \mathbb{R}^{d_{k+1} \times d_k}, \, b_k \in \mathbb{R}^{d_{k+1}}.
\end{equation}

The parameters $\theta = \{W_k, b_k\}_{k=1}^L$ collectively define the
network $\mathcal{C}_L$. The set of all such networks is denoted by
$\mathcal{N}$, with the space of functions represented by:
\begin{equation}
  \mathcal{V}_N = \{u_\theta : \W \rightarrow \mathbb{R} \mid u_\theta(x) = \mathcal{C}_L(x) \text{ for some } \mathcal{C}_L \in \mathcal{N}\}.
\end{equation}
Note that $\mathcal{V}_N$ is not a linear space, although $\Theta =
\{\theta \mid u_\theta \in \mathcal{V}_N\}$ is a linear subspace of
$\mathbb{R}^{\dim \mathcal{N}}$.

\begin{figure}[hbtp]
  \begin{tikzpicture}[scale=1.2, transform shape]

\foreach \i in {1, 2, 3} {
    \node[circle, draw=black, fill=white, minimum size=0.7cm] (I\i) at (0, -\i) {};
}

\foreach \i in {1, 2, 3, 4} {
    \node[circle, draw=black, fill=white, minimum size=0.7cm] (H1\i) at (2, -\i+0.5) {};
}

\foreach \i in {1, 2, 3, 4} {
    \node[circle, draw=black, fill=white, minimum size=0.7cm] (H2\i) at (4, -\i+0.5) {};
}

\foreach \i in {1, 2, 3, 4} {
    \node[circle, draw=black, fill=white, minimum size=0.7cm] (H3\i) at (6, -\i+0.5) {};
}


\foreach \i in {1} {
    \node[circle, draw=black, fill=white, minimum size=0.7cm] (O\i) at (8, -2) {};
}
\node[scale=0.8] at (8., -2) {$u$};

\foreach \i in {1, 2, 3} {
    \foreach \j in {1, 2, 3, 4} {
        \draw[->] (I\i) -- (H1\j);
    }
}

\foreach \i in {1, 2, 3, 4} {
    \foreach \j in {1, 2, 3, 4} {
        \draw[->] (H1\i) -- (H2\j);
    }
}

\foreach \i in {1, 2, 3, 4} {
    \foreach \j in {1, 2, 3, 4} {
        \draw[->] (H2\i) -- (H3\j);
    }
}

\foreach \i in {1, 2, 3, 4} {
    \foreach \j in {1} {
        \draw[->] (H3\i) -- (O\j);
    }
}

\node[scale=0.8] at (0, 0.5) {Input Layer};
\node[scale=0.8] at (2, 0.5) {$\mathcal{C}_1 \circ \sigma$};
\node[scale=0.8] at (4, 0.5) {$\mathcal{C}_2 \circ \sigma$};
\node[scale=0.8] at (6, 0.5) {$\mathcal{C}_3 \circ \sigma$};
\node[scale=0.8] at (8, 0.5) {Output Layer};

\end{tikzpicture}
  \caption{Illustration of $\mathcal{C}_L$.}
\end{figure}
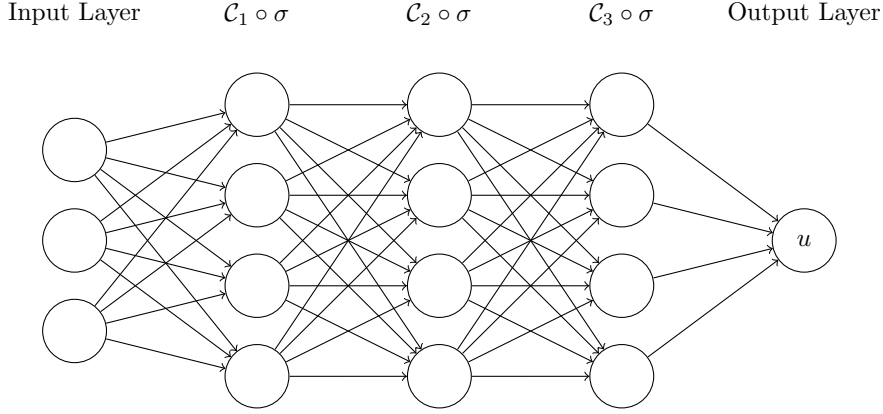

\subsection{Training within the Deep Uzawa Framework}\label{sec:Quadrature}

To implement the Deep Uzawa iteration, we require discrete versions of
the energy functionals $L_D$ and $L_R$. These can be approximated
using quadrature rules for integrals over $\W$ and $\partial \W$. Let
$\mathcal{K}_h$ and $\partial \mathcal{K}_h$ represent sets of
discrete points in $\W$ and on $\partial \W$, respectively, with
weights $w_y$ and $w_b$. We approximate the RitzUz functional as
\begin{equation}
  \sum_{y \in \mathcal{K}_h} w_y g(y) \approx \int_{\W} g(x) \, dx, \quad \sum_{b \in \partial \mathcal{K}_h} w_b g(b) \approx \int_{\partial \W} g(x) \, ds.
\end{equation}
That is
\begin{equation}
  \begin{split}
      L_D(u, \lambda) \approx& \sum_{y \in \mathcal{K}_h} w_y \left( \frac{1}{2} |\vec{A}^{1/2} \nabla u|^2(y) + \frac{1}{2} u^2(y) - u(y) f(y) \right) \\
      &+ \sum_{b \in \partial \mathcal{K}_h}
      w_b\bra{ \frac{\gamma}{2} (u - g)^2(b) 
        -
        (u(b) - g(b)) \lambda(b)
        }.
  \end{split}
\end{equation}
For the PINNUz iteration, we define the discrete functional similarly
through
\begin{equation}
  \begin{split}
      L_{R}(u, \lambda) \approx& \frac{1}{2} \sum_{y \in \mathcal{K}_h} w_y \norm{\mathscr{L}u - f}^2(y) \\
      &+ 
      \sum_{b \in \partial \mathcal{K}_h}
      w_b \bra{ \frac{\gamma}{2}(u - g)^2(b) 
        -  (u(b) - g(b)) \lambda(b) }.
  \end{split}
\end{equation}

\subsection{The Deep Uzawa Iteration}

The Deep Uzawa algorithm integrates the Uzawa iteration with neural
network-based minimisation, alternating between minimising the
discrete energy functional and updating the Lagrange multiplier. The
procedure is detailed in Algorithm \ref{alg:uzawa}.

\begin{algorithm}[h!]
  \caption{Deep Uzawa Iteration}\label{alg:uzawa}
  \begin{algorithmic}[1]
    \Require Initial guess $\lambda^0$, Uzawa step size $\rho > 0$, number of Uzawa steps $N_{\text{Uz}}$, number of SGD iterations $N_{\text{SGD}}$, learning rate $\eta$.
    \State $k \gets 0$
    \State Initialise neural network parameters $\theta^0$
    \For{$k = 1$ to $N_{\text{Uz}}$}
        \For{$m = 0$ to $N_{\text{SGD}} - 1$}
          \State Compute stochastic gradient $\nabla_\theta L_{Q,\gamma}(u_\theta^m, \lambda^k)$
          \State Update parameters: $\theta^{m+1} \gets \theta^m - \eta \nabla_\theta L_{Q,\gamma}(u_\theta^m, \lambda^k)$
        \EndFor
        \State $u^k \gets u_\theta^{N_{\text{SGD}}}$
        \State Update Lagrange multiplier: $\lambda^{k+1}(b) \gets \lambda^k(b) + \rho (u^k(b) - g(b))$, $\forall b \in \partial \mathcal{K}_h$
        \State $k \gets k + 1$
    \EndFor
  \end{algorithmic}
\end{algorithm}
The total number of training epochs is $N_{\text{SGD}} \times
N_{\text{Uz}}$.

\section{Numerical Results}\label{sec:Numresults}

In this section, we present numerical experiments to evaluate the
performance of our proposed methodologies, focusing on the
effectiveness of the Deep Uzawa approaches (RitUz and PINNUz) for
singularly perturbed boundary value problems.

For these experiments, we use the PyTorch Adam optimiser
\cite{stochasticADAMPyTorch} with a learning rate of $\eta =
10^{-3}$. Unless stated otherwise, we set $N_{\text{SGD}} = 40$ and
$N_{\text{Uz}} = 500$, and adopt the sigmoid-weighted linear unit
(SiLU) activation function \cite{Elfwing2018SiLU_ref}:
\begin{equation}
    \sigma(x) := \dfrac{x}{1 + e^{-x}}.
\end{equation}

\subsection{Example: Two-sided 1D Boundary Layer}\label{ex:Example_1D_2_BL}

As a benchmark, we consider a singularly perturbed problem on the
domain $\Omega = (0,1)$ with a small parameter $\epsilon > 0$. The
exact solution exhibits boundary layers, presenting a challenge for
standard penalty-based neural network approaches that often require
careful tuning of penalty weights for accurate results. This example
allows us to examine how varying the parameters $\rho$, $\gamma$, and
$\epsilon$ influences the performance of RitUz and PINNUz.

We seek $u^* \in \sobh1(\Omega)$ that satisfies
\begin{equation}
    \begin{split}
        - \epsilon \Delta u^* + u^* = 1 \quad \text{in } \Omega, \qquad  u^*(0) = u^*(1) = 0.
    \end{split}
\end{equation}
The exact solution is given by
\begin{equation}\label{eq:1D_exact_sol}
    u^*(x) = 1 - \dfrac{e^{(1-x)/\sqrt{\epsilon}} + e^{x/\sqrt{\epsilon}}}{e^{1/\sqrt{\epsilon}} + 1}, \quad \text{for } x \in \Omega.
\end{equation}
This setup provides a test case to evaluate how RitUz and PINNUz
schemes perform under varying conditions, demonstrating their
capability to handle singularly perturbed problems without extensive
tuning. The neural networks used in these experiments have depth $L=5$
and width $h=40$, parameterised by $\theta$.

\subsubsection{RitUz}
Figures \ref{fig:RitUz_1D_2BL} and \ref{fig:RitUz_1D_2BL_gamma_0} show
the results of applying the RitUz algorithm to the problem defined in
Example \ref{ex:Example_1D_2_BL}. In these experiments, we fix the PDE
parameter $\epsilon \in \{10^{-1}, 10^{-3}\}$ and the boundary
Lagrangian parameter $\gamma \in \{0, 2\}$. We vary the Uzawa
parameter $\rho \in [0.01, 20]$ to study its impact on convergence.

For both large and small values of $\epsilon$, we observe that the
rate of convergence of the $\leb2$-error with respect to the update
number increases as $\rho$ increases. This trend holds until $\rho$
exceeds the bound specified by condition
\eqref{eq:Dirichlet_energy_rho_assumption}, after which $u^k$ fails to
converge to $u^*$ as the update number increases. 



\begin{figure}[h!]
  \centering
  \includegraphics[width=\textwidth]{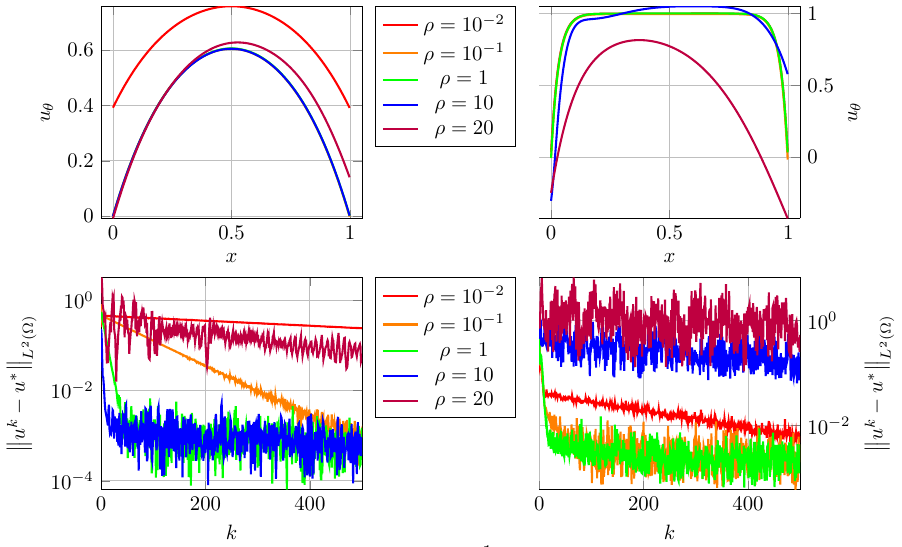} 
  \caption{Results for Example \ref{ex:Example_1D_2_BL} using the
    RitUz scheme with $\gamma = 2$ and $\epsilon = 10^{-1}$ (left) and
    $\epsilon = 10^{-3}$ (right). The plots show the state
    $u_{\theta}$ (top), and the $\leb2$-error
    $\Norm{u^k-u^*}_{\leb2(\W)}$ (bottom) as $\rho$ varies in $[0.01,
      20]$.}
  \label{fig:RitUz_1D_2BL}
\end{figure}

\begin{figure}[h!]
  \centering
  \includegraphics[width=\textwidth]{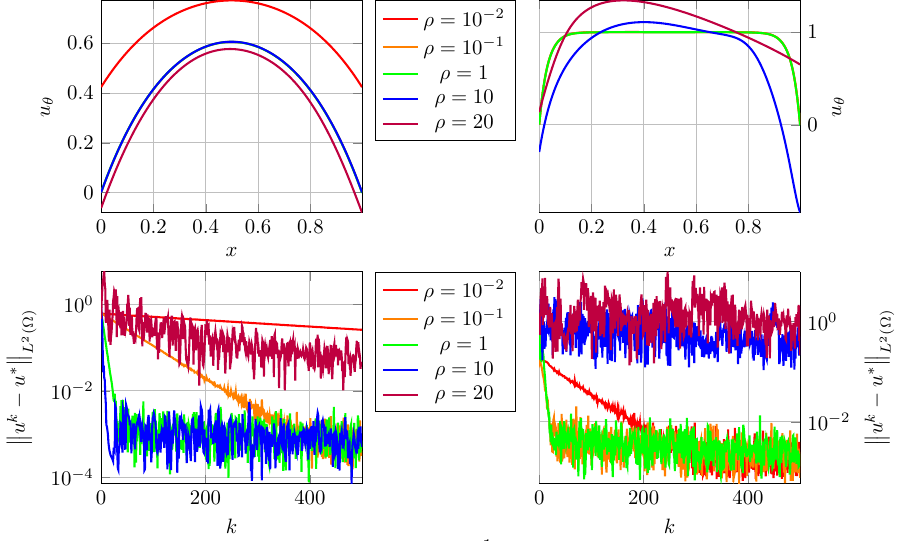}
  \caption{Results for Example \ref{ex:Example_1D_2_BL} using the
    RitUz scheme with $\gamma = 0$ and $\epsilon = 10^{-1}$ (left) and
    $\epsilon = 10^{-3}$ (right). The plots show the state
    $u_{\theta}$ (top), and the $\leb2$-error
    $\Norm{u^k-u^*}_{\leb2(\W)}$ (bottom) as $\rho$ varies in $[0.01,
      20]$.}
  \label{fig:RitUz_1D_2BL_gamma_0}
\end{figure}

\subsubsection{PINNUz}
Figure \ref{fig:PINNUz_1D_2BL} presents the results of the PINNUz
algorithm applied to the problem from Example
\ref{ex:Example_1D_2_BL}. In these experiments, we fix the PDE
parameter $\epsilon \in \{10^{-1}, 10^{-3}\}$ and set the boundary
Lagrangian parameter $\gamma = 2$. The Uzawa parameter $\rho$ is
varied in the range $[10^{-4}, 5]$ to examine its effect on
convergence.

As with the RitUz scheme, we observe that the $\leb2$-error
convergence rate with respect to the update number improves as $\rho$
increases. This holds until $\rho$ violates the condition in equation
\eqref{eq:PINNs_energy_gamma_assumption_no2}, causing $u^k$ to fail to
converge to $u^*$ as the update number grows. 


\begin{figure}[h!]
  \centering
  \includegraphics[width=\textwidth]{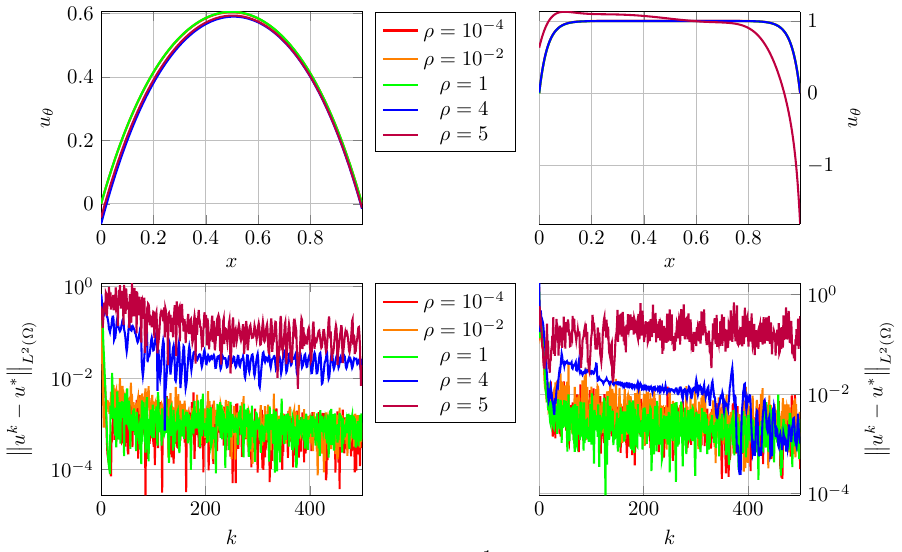}
  \caption{Results for Example \ref{ex:Example_1D_2_BL} using the
    PINNUz scheme with $\gamma = 2$ and $\epsilon = 10^{-1}$ (left)
    and $\epsilon = 10^{-3}$ (right). The plots show the state
    $u_{\theta}$ (top), and the $\leb2$-error
    $\Norm{u^k - u^*}_{\leb2(\W)}$ (bottom) for $\rho \in [10^{-4},
      5]$.}
  \label{fig:PINNUz_1D_2BL}
\end{figure}

\subsection{Example: 2D L-shaped Domain}\label{ex:Example_L_shaped}

As highlighted in Remark \ref{rem:nonconvex}, extending our
methodology to non-convex domains introduces additional challenges due
to geometric singularities. To demonstrate the robustness of the
PINNUz scheme in such settings, we consider a non-convex,
two-dimensional L-shaped domain $\W := (-1,1)^2 \setminus ([0,1)
  \times (-1,0])$. In this example, the solution $u^* \in \sobh1(\W)$
satisfies the reaction-diffusion equation:
\begin{equation}
    -\epsilon\Delta u^* + u^* = f \quad \text{in } \W, \quad u = g \quad \text{on } \partial \W.
\end{equation}
We choose $f$ and $g$ so that the exact solution is:
\begin{equation}\label{eq:L-shaped_exact_soln}
    \begin{split}
        u^*(x, y) = &\bra{x^2 + y^2}^{\frac{2}{3}} \sin\left(\dfrac{2}{3} \left[\text{atan2}(y, -x) - \pi\right]\right) \\
        &\times \bra{(x+1)^2 + (y-1)^2}^{\frac{2}{3}} \sin\left(2 \cdot \text{atan2}(y-1, x+1)\right).
    \end{split}
\end{equation}
It is chosen to have the correct asymptotic behaviour at the origin
whilst also non-trivial boundary conditions elsewhere. An illustration
of this solution is shown in Figure \ref{fig:L-shaped_exact_soln_a}.

For this experiment, we use $\eta = 10^{-4}$. We study the behaviour
of the networks for fixed values of $\epsilon$ and $\gamma$, while
varying $\rho$. The network architecture differs slightly from the
one-dimensional example, in that $L=10$ and $h=40$.

Figures
\ref{fig:L-shaped_exact_soln_b}--\ref{fig:L-shaped_exact_soln_c}
displays the state $u_\theta$ for different values of $\rho$: two
cases where the solution converges in the $\leb2$-norm and one where
it diverges. In the convergent cases, we observe that the
approximation struggles to match the boundary data near the re-entrant
corner at $(0,0)$, a common challenge in non-convex domains due to
reduced regularity. When $\rho$ exceeds the threshold defined in
equation \eqref{eq:PINNs_energy_gamma_assumption_no2}, the scheme
fails to converge globally.


\begin{figure}[h!]
    \centering
    \begin{subfigure}[t]{0.32\textwidth}
        \includegraphics[width=\textwidth]{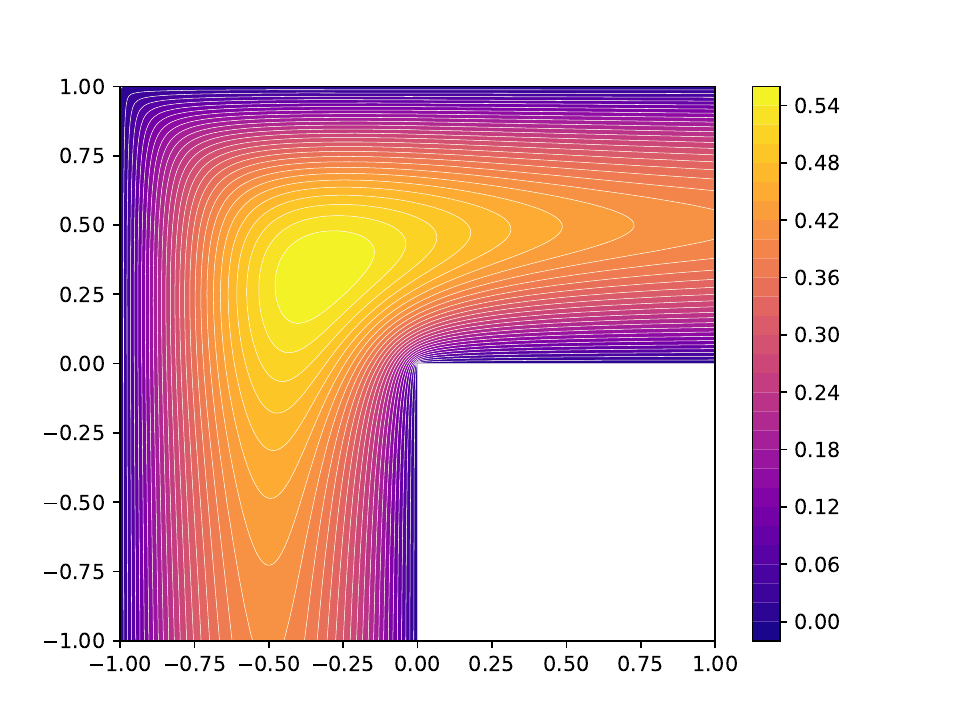}
        \caption{$u^*$}
        \label{fig:L-shaped_exact_soln_a}
    \end{subfigure}%
    \hfill
    \begin{subfigure}[t]{0.32\textwidth}
        \includegraphics[width=\textwidth]{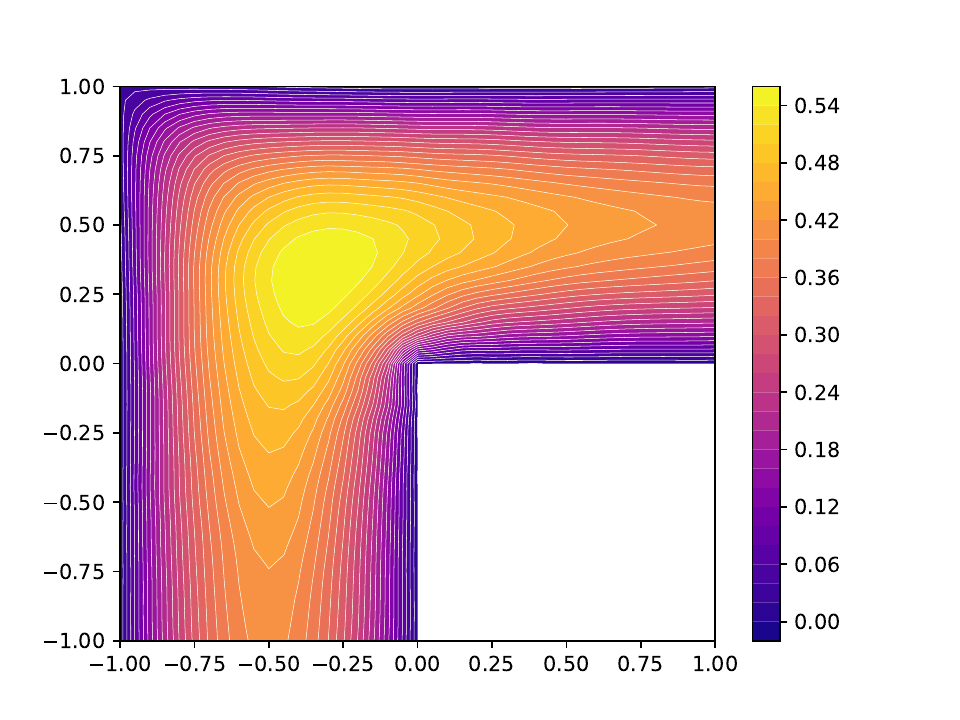}
        \caption{$u_\theta$ for $\rho = 1$}
        \label{fig:L-shaped_exact_soln_b}
    \end{subfigure}%
    \hfill
    \begin{subfigure}[t]{0.32\textwidth}
        \includegraphics[width=\textwidth]{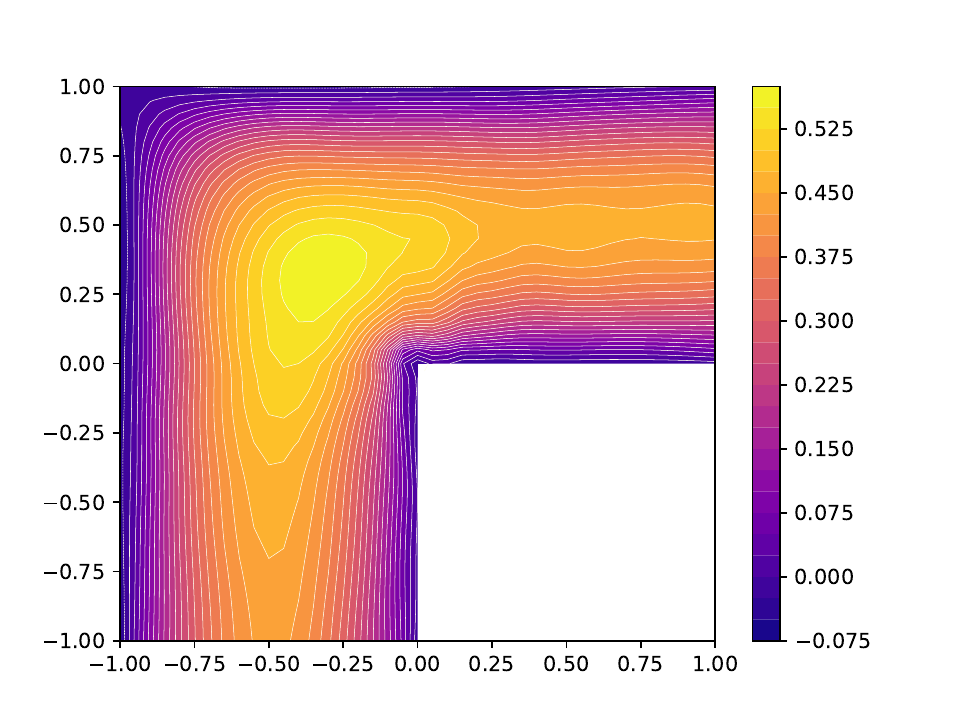}
        \caption{$u_\theta$ for $\rho = 5$}
        \label{fig:L-shaped_exact_soln_c}
    \end{subfigure}
    \caption{Example \ref{ex:Example_L_shaped}: (a) Illustration of the exact solution $u^*$ on the L-shaped domain as defined in equation \eqref{eq:L-shaped_exact_soln}. (b) and (c) show the state $u_\theta$ for $\gamma = 2$, $\epsilon = 10^{-3}$, and $\rho = 1$ and $\rho = 5$, respectively.}
    \label{fig:PINNUz_2D_combined}
\end{figure}

\begin{figure}
    \centering
    \includegraphics[width=\textwidth]{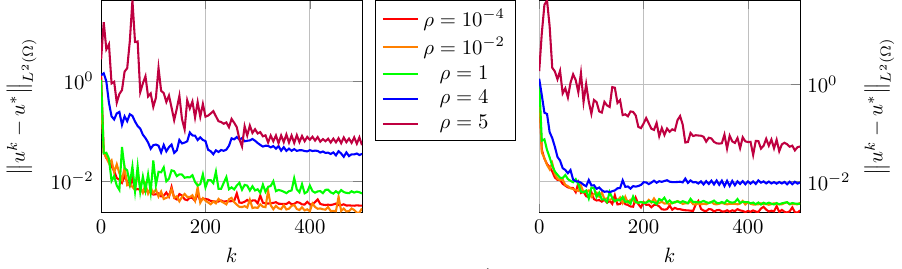}
    \caption{Example \ref{ex:Example_L_shaped}: The $\leb2$-error $\Norm{u^k-u^*}_{\leb2(\W)}$ for $\gamma = 2$, $\epsilon = 10^{-1}$ (left) and $\epsilon = 10^{-3}$ (right), with $\rho \in [10^{-4}, 5]$.}
    \label{fig:PINNUz_2D_L_error_adjoint}
\end{figure}

\subsection{Validating against penalty-based boundary}\label{sec:ValidationHardPenalty}

Penalty-based methods impose boundary conditions by minimising
\begin{equation}
  u^D_{\theta, \gamma} := \min\limits_{u_\theta \in \mathcal{V}_N} J_{D}(u_\theta),
  \qquad
  u^R_{\theta, \gamma} := \min\limits_{u_\theta \in \mathcal{V}_N} J_{R}(u_\theta),
\end{equation}
where $J_{D}$ and $J_{R}$ are defined in equations
\eqref{eq:DG_Cost_functional} and \eqref{eq:PINNs_Cost_functional},
respectively. The penalty parameter $\gamma$ controls the weight of
the boundary condition enforcement.

Penalty-based methods are flexible and straightforward to implement
but require careful tuning of $\gamma$. High values of $\gamma$ can
lead to better adherence to boundary conditions but may also cause
numerical stiffness and ill-conditioning, making optimisation more
difficult.

\subsubsection{RitUz and PINNUz}
Figures \ref{fig:DG_no_Uzawa} and \ref{fig:PINNs_no_Uzawa} illustrate
the performance of penalty-based methods for different $\gamma$
values. For moderate $\gamma$, the solution approximates the true
solution, but larger $\gamma$ can lead to issues with stiffness. The RitUz and PINNUz schemes, implemented with moderate $\rho$ values
and reduced $\gamma$, show that these methods can achieve reliable
convergence without extensive parameter tuning. 

\begin{figure}[h!]
    \centering
    \includegraphics[width=\textwidth]{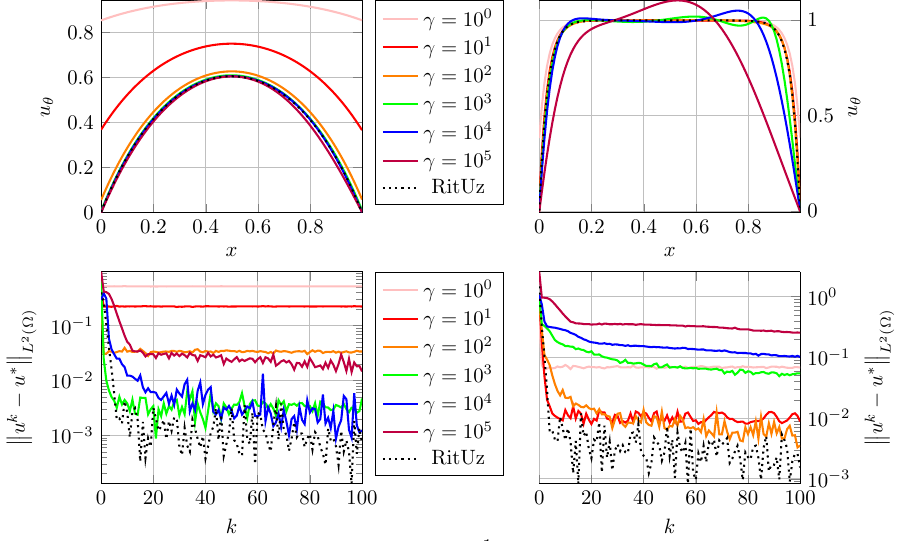}
    \caption{Example \ref{ex:Example_1D_2_BL}: For $\epsilon =
      10^{-1}$ (left) and $\epsilon = 10^{-3}$ (right), results from
      minimising $J_{D}$ over $\mathcal{V}_N$ for $\gamma \in [1,
        10^5]$. The plots show the state (top) and $\leb2$-error
      vs. update number (bottom). Outputs of the RitUz scheme for $\gamma = 0$
      and $\rho = 1$ are also shown.}
    \label{fig:DG_no_Uzawa}
\end{figure}

\begin{figure}[h!]
    \centering
    \includegraphics[width=\textwidth]{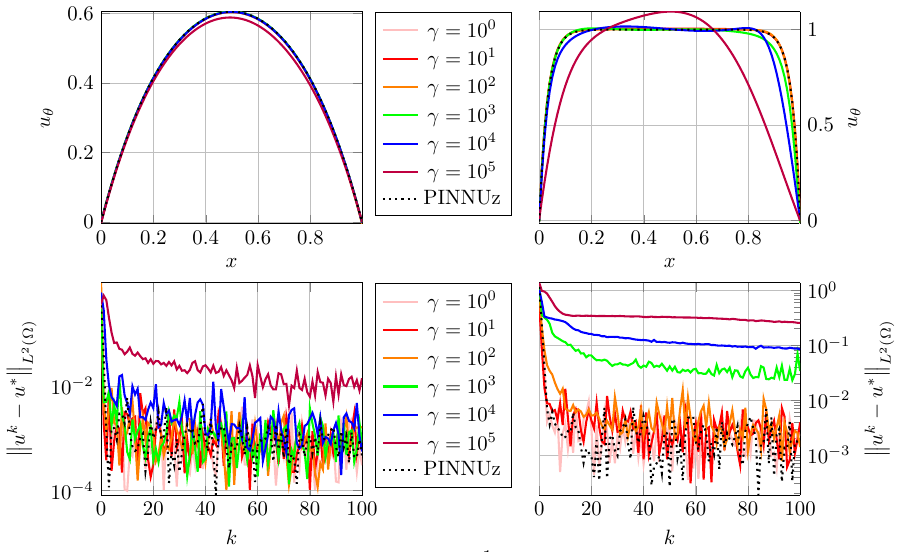}
    \caption{Example \ref{ex:Example_1D_2_BL}: For $\epsilon =
      10^{-1}$ (left) and $\epsilon = 10^{-3}$ (right), results from
      minimising $J_{R}$ over $\mathcal{V}_N$ for $\gamma \in [1,
        10^5]$. The plots show the state (top) and $\leb2$-error
      vs. update number (bottom). Outputs from the PINNUz scheme for $\gamma =
      2$ and $\rho = 1$ are also plotted.}
    \label{fig:PINNs_no_Uzawa}
\end{figure}

\subsection{Example: Higher-Dimensional Problems}\label{sec:ND_Laplace}

In this section, we consider solving Laplace's equation on the
$2d$-dimensional unit sphere, extending the problem to higher
dimensions. The harmonic function $u^* \in \sobh{1}(\Omega)$ satisfies
the Dirichlet boundary condition:
\begin{equation}\label{eq:Laplace_boundary_condition}
    u^*(\vec{x}) = 
        \sum\limits_{i=1}^{d} x_{2i-1} x_{2i}, 
    \quad \forall \vec{x} \in \mathbb{S}^{2d-1}.
\end{equation}
A common method for enforcing boundary conditions directly is through
hard imposition, which modifies the neural network's output to
inherently satisfy the boundary conditions
\cite{Sukumar22hard_bcs_citation, Yulei_Liao_2021,
  lu2021physicsinformedneuralnetworkshard}.

\subsubsection{RitUz}
We compare three methodologies for this problem: hard boundary
conditions (cRitz) \cite{Sukumar22hard_bcs_citation, Yulei_Liao_2021},
the Ritz penalty method, and the RitUz scheme.

The cRitz method conditions the neural network output as follows:
\begin{equation}
    (1 - \abs{\vec{x}}^2) u_{\theta}(\vec{x}) + \abs{\vec{x}}^2 u^*(\vec{x}) \mapsto u_{\theta}(\vec{x}).
\end{equation}
The penalty method includes a standard $\leb2(\partial \W)$ penalty,
and the RitUz scheme follows the iterative approach previously
described.

For these experiments, collocation points are uniformly sampled within
the domain every 10 epochs, with a batch size of 1024 for the
Dirichlet energy computation. Boundary points are sampled with a batch
size of 2048, updated every 10 epochs. In the Uzawa scheme, an initial
fixed set of 2048 boundary points is used for approximating
$\lambda$. We employ a network depth of $L=5$ and width $h=40$.

Figure \ref{fig:Hrd_v_Pen_v_Uzw} shows the $\leb2$-error of the Ritz
methods per epoch, indicating that the RitUz method with $\gamma=10$
achieves significantly lower errors compared to the penalty method and
performs similarly to the hard boundary condition approach. The
increase in error with dimension suggests that $\rho$ may need to be
adjusted due to the dimensional dependency of $C_{\text{tr}}$, as
discussed in Remark \ref{rem:trace}.

\begin{figure}[h!]
    \centering
    \includegraphics[width=\textwidth]{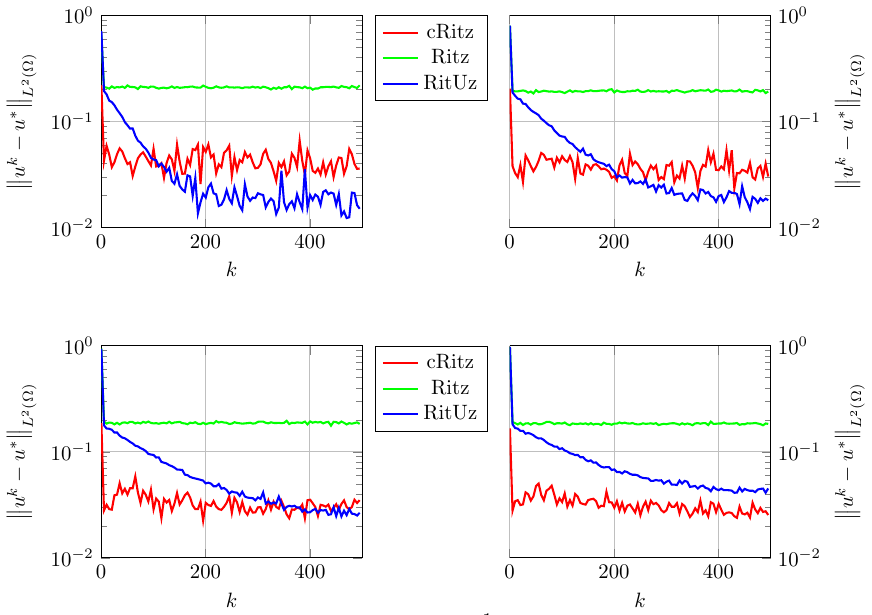}
    \caption{Example \ref{sec:ND_Laplace}: Comparison of
      $\leb2$-errors for the cRitz, penalty, and RitUz methods in
      dimensions 4 (top left), 6 (top right), 8 (bottom left), and 10
      (bottom right) with $\gamma=10$ and $\rho=0.1$.}
    \label{fig:Hrd_v_Pen_v_Uzw}
\end{figure}

\begin{figure}[h!]
    \centering
    \includegraphics[width=0.55\textwidth]{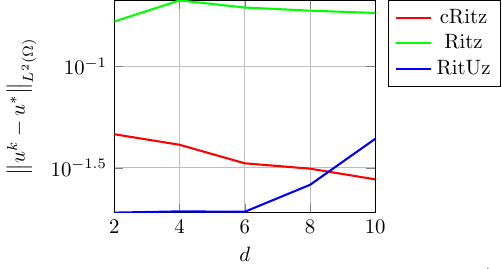}
    \caption{Example \ref{sec:ND_Laplace}: Final $\leb2$-errors for
      the cRitz, penalty, and RitUz methods after 50,000 epochs for
      dimension $d=2,4,6,8,10$.}
    \label{fig:Hrd_v_Pen_v_Uzw_final}
\end{figure}

\subsubsection{PINNUz}\label{sec:ND_PINNs_Laplace}

We extend the experiment to PINNs, comparing hard boundary conditions
(hPINNs) \cite{lu2021physicsinformedneuralnetworkshard}, penalty
PINNs, and the PINNUz scheme. The network architecture and sampling
methodology are as in the previous section.

Figure \ref{fig:Error_v_EpUp} shows the error variation with respect
to $N_{\text{SGD}}$ for the PINNUz scheme. Higher dimensions require
more epochs per Uzawa step to maintain accurate updates. For
$N_{\text{SGD}}=500$, performance matches the hard boundary condition
for 2D and 4D problems, but errors increase for higher dimensions,
suggesting that $N_{\text{SGD}}$ may need adjustment.

Figure \ref{fig:Error_v_EpUp_PINNUz} presents the final $\leb2$-errors
for hPINNs, PINNs and the PINNUz scheme across dimension. The figure
highlights that for lower-dimensional problems (2D and 4D), the PINNUz
scheme performs comparably to hard boundary condition methods,
achieving similar error levels.

Figure \ref{fig:time_v_dim_PINNUz} shows that the computation time
scales approximately linearly with problem dimension across hPINNs,
PINNs, and PINNUz.

\begin{figure}[h!]
    \centering
    \includegraphics[width=.55\textwidth]{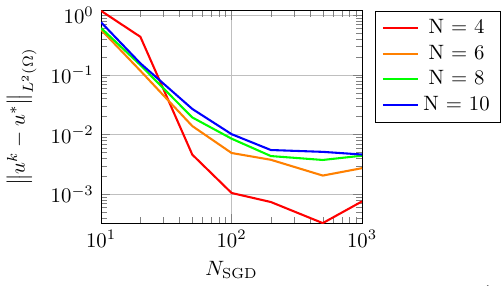}
    \caption{Example \ref{sec:ND_PINNs_Laplace}: Error of the PINNUz scheme versus $N_{\text{SGD}}$ for $\gamma=2$, $\rho=0.1$, and various dimensions.}
    \label{fig:Error_v_EpUp}
\end{figure}

\begin{figure}[h!]
    \centering
    \includegraphics[width=\textwidth]{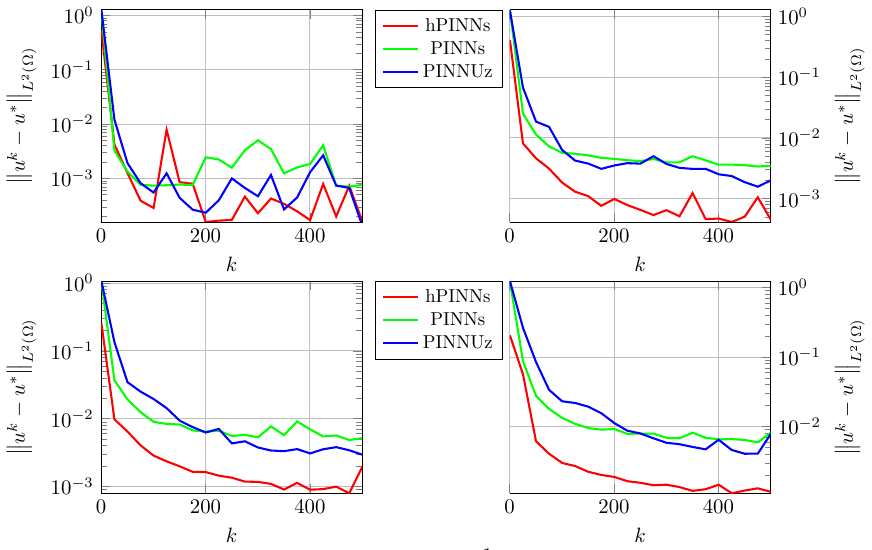}
    \caption{Example \ref{sec:ND_PINNs_Laplace}: $\leb2$-errors for hard boundary conditions, penalty methods, and the PINNUz scheme for dimensions 2, 4, 6, and 8, averaged over three trials.}
    \label{fig:Error_v_EpUp_PINNUz}
\end{figure}

\begin{figure}[h!]
    \centering
    \includegraphics[width=\textwidth]{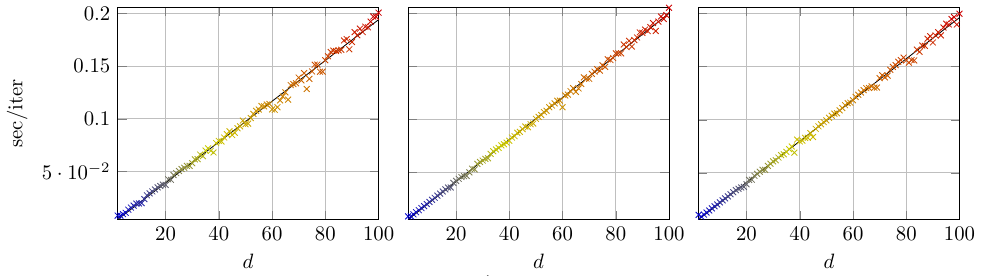}
    \caption{Example \ref{sec:ND_PINNs_Laplace}: Average time per iteration against dimension $d =2,\ldots, 100$; for hard boundary conditions (left), penalty methods (centre), and the Uzawa method (right).}
    \label{fig:time_v_dim_PINNUz}
\end{figure}

\section*{Acknowledgements}
This work was initiated under a during a hosted visit at ITE,
Crete. AP and TP received support from the EPSRC programme grant
EP/W026899/1. TP was also supported by the Leverhulme RPG-2021-238 and
EPSRC grant EP/X030067/1.

\bibliographystyle{siamplain}
\bibliography{nitsche}
\end{document}